\documentclass[12pt]{article}
\usepackage{setspace} 
\usepackage[dvips]{graphicx} 
\usepackage{amsmath,amsthm,amsfonts,amssymb}
\RequirePackage[colorlinks,citecolor=blue,urlcolor=blue,linkcolor=blue]{hyperref}
\hypersetup{
colorlinks = true,
citecolor=blue,
urlcolor=blue,
linkcolor=blue,
pdfauthor = {Alexey Kuznetsov, Minjian Yuan},
pdfkeywords = {Wronskian, Hermite polynomials, Mehler formula, heat kernel, Darboux transformation},
pdftitle = {Mehler formula for Wronskians of Hermite polynomials},
pdfpagemode = UseNone
}
    \def\qed{\hfill$\sqcap\kern-8.0pt\hbox{$\sqcup$}$\\}

    \def\im{\textnormal {Im}}

    \def\E{{\mathcal E}}
    \def\r{{\mathbb R}}
    \def\wr{\textnormal{Wr}}
    \def\c{{\mathbb C}}
    \def\z{{\mathbb Z}} 
    \def\n{{\mathbb N}}  
    \def\d{{\textnormal d}}
     
    \def\k{\textbf{\textsl{k}}}
    \def\j{\textbf{\textsl{j}}}    
	\newtheorem{theorem}{Theorem}
	\newtheorem{lemma}{Lemma}
	\newtheorem{proposition}{Proposition}
	\newtheorem{corollary}{Corollary}

\title{Mehler formula for Wronskians of Hermite polynomials}
\author{ 
{Alexey Kuznetsov, Minjian Yuan \footnote{Dept. of Mathematics and Statistics,  York University,
4700 Keele Street, Toronto, ON, M3J 1P3, Canada.  
\\  Email: akuznets@yorku.ca, yuanm@yorku.ca}}}

\date{\today}

\begin{document}
\maketitle

\begin{abstract}
We prove that the bilinear generating function for Wronskians of Hermite polynomials can be expressed as the classical Mehler kernel multiplied by a polynomial, thereby extending the result of Pupasov-Maksimov \cite{Pupasov_2015} for exceptional Hermite polynomials. We establish several properties of the polynomials appearing in this extended version of the Mehler formula and present four conjectures about them.
\end{abstract}
{\vskip 0.15cm}
 \noindent {\it Keywords}: Wronskian, Hermite polynomials, Mehler formula, heat kernel, Darboux transformation
 
{\vskip 0.25cm}
 \noindent {\it 2020 Mathematics Subject Classification}: Primary 26C05, Secondary  33C45


\section{Introduction and main results}

 Let $H_n(x)$ denote the Hermite polynomials. 
It is known (see \cite[formula (18.18.28)]{NIST}) that for all $|z|<1$ and $x,y\in \c$ 
\[
E(z,x,y):=\sum\limits_{n\geqslant 0} z^n \frac{H_n(x) H_n(y)}{2^n n!}=
\frac{1}{\sqrt{1-z^2}} \exp\bigg( \frac{2xzy-z^2(x^2+y^2)}{(1-z^2)} \bigg). 
\]
The expression on the right-hand side is called the Mehler kernel, and it plays a fundamental role in many applications. For example, it is used to construct the kernel of the fractional Fourier transform \cite{Namias1980}, the propagator for the harmonic oscillator in quantum mechanics \cite{Pupasov_2015}, and the transition probability density of the Ornstein-Uhlenbeck process \cite{Bakry2014}.

Before stating our results, we introduce some notation. We denote by $\n$ the set of positive integers and by $\z_{\geqslant 0}$ the set of nonnegative integers. The Wronskian of smooth functions $\{f_j(x)\}_{1\leqslant j \leqslant m}$ is defined as 
\[
\wr[f_1,\dots,f_m]:=\det\,[\partial_x^{i-1} f_j(x)]_{1\leqslant i,j \leqslant m}.
\]  
For an $m$-tuple $\j=(j_1,j_2,\dots,j_m) \in \z_{\geqslant 0}^m$ we denote
\[
H_{\j}(x):=\wr[H_{j_1}(x), \dots,H_{j_m}(x)].
\]
We set $H_{\j}(x) \equiv 1$ if ${\mathbf j}=\varnothing$ (the empty tuple). 
In what follows, $\k=(k_1,k_2,\dots,k_l)\in \n^l$ will always denote a strictly increasing sequence of positive integers, and $l$ will denote  the length of the sequence $\k$. For $n\in \z_{\geqslant 0}$ we denote $H_{\k,n}(x):=H_{\j}(x)$  where $\j=(k_1,k_2,\dots,k_l,n)$. 

Wronskians of Hermite polynomials are important objects that arise in several contexts.  They play a fundamental role in the classification of rational monodromy-free Schr\"odinger operators with rational potentials having quadratic growth at infinity \cite{Oblomkov1999}. They appear as rational solutions of the fourth Painlev\'e equation \cite{Gomez2021}. Zeros of Wronskians of Hermite polynomials were studied in \cite{Felder2012}. It is known \cite{Adler1994} that $H_{\k}(x)$ has no real zeros if and only if $\k$ is a {\it Krein-Adler sequence}, that is, a strictly increasing sequence   $\k=(k_1,k_2,\dots, k_l) \in \n^l$ such that $l$ is even and  $k_{2i}=k_{2i-1}+1$ for all $1\leqslant i \leqslant l/2$.

We now define our main object of interest: 
\begin{equation}\label{def:Ek}
E_{\k}(z,x,y):=\sum\limits_{n \in \z_{\geqslant 0} \setminus \k}  z^n \frac{H_{\k,n}(x) H_{\k,n}(y)}{2^{n+l} n! \prod\limits_{j=1}^l (n-k_j)}.
\end{equation} 
As we will show later in Lemma \ref{lemma1}, the series in \eqref{def:Ek} converges for $|z|<1$, $x,y\in \c$ and defines an analytic function of three variables in that domain. Our main result is the following
\begin{theorem}\label{thm_main}
For any strictly increasing sequence $\k \in \n^l$ there exist polynomials $Q^{\k}_n(x,y)$, $0\leqslant n \leqslant k_l+1$ such that 
\begin{equation}\label{eqn_main}
E_{\k}(z,x,y)=E(z,x,y) \sum\limits_{n=0}^{k_l+1} z^n Q^{\k}_n(x,y),
\end{equation}
for all $|z|<1$, $x,y\in \c$.
\end{theorem}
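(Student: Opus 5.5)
The plan is to reduce \eqref{eqn_main} to an ordinary differential equation in $z$ driven by the classical Mehler kernel, and then to show that this equation has a solution of the form $E\times(\text{polynomial in } z)$.

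\emph{Step 1: writing $H_{\k,n}$ as a differential operator applied to $H_n$.} Expanding the Wronskian $\wr[H_{k_1},\dots,H_{k_l},f]$ along its last column gives
\[
H_{\k,n}(x)=L_x H_n(x),\qquad L_x=\sum_{i=0}^{l} a_i(x)\,\partial_x^i,
\]
where the $a_i$ are polynomials (cofactors built from the $H_{k_j}$) and $a_l=H_{\k}$. Since $L_xH_{k_j}=0$, the terms with $n\in\k$ vanish, so the sum in \eqref{def:Ek} may be taken over all $n\geqslant 0$. For this we use the convention that each summand with $n=k_j$ is $0$.

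\emph{Step 2: removing the denominator $\prod_j(n-k_j)$ by an ODE in $z$.} Put
\[
G(z,x,y)=\sum_{n\geqslant 0} z^n\frac{H_n(x)H_{\k,n}(y)}{2^{n+l}n!\prod_j(n-k_j)},
\]
so that $E_{\k}=L_xG$, with $L_x$ acting term by term. This is legitimate by the locally uniform convergence from Lemma~\ref{lemma1}. Since $z\partial_z z^n=nz^n$, we get
\[
\prod_{j=1}^l(z\partial_z-k_j)\,G=2^{-l}L_yE(z,x,y).
\]
Every derivative of $E$ in $x$ or $y$ equals $E$ times a polynomial in $x,y,z$ divided by a power of $1-z^2$. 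For example, $\partial_yE=E\cdot\frac{2zx-2z^2y}{1-z^2}$. Hence the right-hand side equals $E\cdot R(z,x,y)$ with $R$ rational in $z$.

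\emph{Step 3: solving the ODE with the ansatz $G=E\cdot P$.} Substituting $G=E\cdot P$ and using $z\partial_zE=E\cdot S$ with $S$ explicit (the logarithmic derivative of the Mehler kernel) turns the equation into a linear Euler-type ODE for $P$ with rational coefficients. The solution of interest is the one that is analytic at $z=0$ with Taylor coefficients prescribed by the series. The general solution adds combinations of $z^{k_j}$. These contribute terms $z^{k_j}H_{k_j}(x)\times(\dots)$ to $G$, and they are annihilated by $L_x$, so they do not affect $E_{\k}=L_xG$.

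\emph{Step 4, the main obstacle: polynomiality.} One must then show that $L_x(E\cdot P)/E$ is a polynomial in $z$ of degree at most $k_l+1$. In particular, all powers of $(1-z^2)^{-1}$ must cancel, and the coefficients must be polynomial in $x,y$. I do not expect this to follow from the ODE alone.

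To handle it, I would argue by induction on $l$ using one-step Darboux transformations. Write $\k'=(k_1,\dots,k_{l-1})$ and $\psi=H_{\k',k_l}$. The Jacobi (Sylvester) identity for Wronskians gives
\[
H_{\k'}\,H_{\k,n}=\wr[\psi,H_{\k',n}]=A\,H_{\k',n},\qquad A=\psi\,\partial_x-\psi'.
\]
So $E_{\k}$ is obtained from a series with the weights of $\k'$ by applying $A_xA_y/(H_{\k'}(x)H_{\k'}(y))$ and dividing the coefficient of $z^n$ by $2(n-k_l)$.

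The induction would combine two ingredients:
\begin{itemize}
\item[(i)] The induction hypothesis that $E_{\k'}=E\cdot(\text{polynomial in } z)$.
\item[(ii)] The intertwining $A^\dagger A=\mathcal T-k_l$. Here $\mathcal T$ is the Darboux-transformed Hermite operator in the $\k'$ picture, whose eigenvalue on $H_{\k',n}$ is $n$, and $A^\dagger$ is the formal adjoint of $A$ with respect to the weight attached to $\k'$.
\end{itemize}
Using (ii), the division by $n-k_l$ can be carried out exactly, with no integration in $z$. The reason is that $\mathcal T$ acts on the kernel as $z\partial_z$, and $z\partial_z-k_l$ applied to $E\times(\text{polynomial})$ stays of that form, since $z\partial_z E=E\cdot S$.

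The expected outcome is that each step raises the $z$-degree by at most $k_l-k_{l-1}$ plus a bounded amount, giving the bound $k_l+1$. The cancellation of the denominators $1-z^2$ would be checked by observing that $\partial_x-\partial_y$-type combinations applied to $E$ produce exactly the factor $1-z^2$ in the numerator. I expect this bookkeeping to be the hardest and most computational part of the argument.
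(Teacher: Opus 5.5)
There is a genuine gap. Steps~1--2 are fine, but everything that makes the theorem true is deferred to Step~4: rationality of $E_{\k}/E$ in $z$, cancellation of the poles at $z=\pm1$, and the degree bound $k_l+1$. The mechanism proposed there does not work, because the factorization $A^\dagger A=\mathcal T-k_l$ lets you \emph{apply} $z\partial_z-k_l$, not invert it. What your identities actually give is
\[
(z\partial_z-k_l)\,E_{\k}=\frac{A_xA_y\,E_{\k'}}{2H_{\k'}(x)H_{\k'}(y)},
\]
where the $n=k_l$ term of $E_{\k'}$ drops out because $AH_{\k}=0$. Equivalently, $A^\dagger_y$ maps the $\k$-kernel to $A_x$ of the $\k'$-kernel. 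Recovering $E_{\k}$ therefore requires solving a first-order equation, either in $z$ or in $y$ (the latter is what Lemma~\ref{lemma3} does). That $z\partial_z-k_l$ preserves the class ``$E$ times a rational function'' says nothing about its inverse, which in general leaves that class. Your own $G$ shows this already for $l=1$. It solves $(z\partial_z-k)G=\tfrac12 L_yE$, whose right-hand side is in the class. On the other hand, Lemma~\ref{lemma2} gives $\psi_{k,n}(y)=-2(n-k)\int_{-\infty}^y\psi_k\psi_n\,\d u$, hence $e^{-x^2/2-y^2}G=-I_k(z,x,y)+z^k\psi_k(x)c(y)$ with $c(y)=\int_{-\infty}^y\psi_k^2\,\d u/(2^kk!)$. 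By Lemma~\ref{lemma4} this contains the term $-\tfrac{\sqrt\pi}{2}z^k\psi_k(x)\,\textnormal{erf}(w)$, with $w=(y-xz)/\sqrt{1-z^2}$. That term is neither $E$ times a rational function of $z$ nor a homogeneous solution $c(x,y)z^k$, so the ansatz of Step~3 fails as well. That the inverse stays in the class for the particular right-hand side displayed above is exactly what has to be proved.

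Two ideas are missing, and they are what the paper supplies. First, you must show the non-elementary part cancels. In the representation of Lemma~\ref{lemma5}, $\textnormal{erf}$ enters only through $z^{k_j}\psi_{k_j}(x)\,\textnormal{erf}(w)$. When the order-$l$ operator $\mathcal M_{\k,x}$ is applied, the term with no derivative on $\textnormal{erf}(w)$ is $\textnormal{erf}(w)\,\mathcal M_{\k,x}\psi_{k_j}=0$, while each $\partial_x^m\textnormal{erf}(w)$ is a multiple of $e^{-w^2}H_{m-1}(w)$. Bookkeeping then gives $e^{l(x^2+y^2)/2}\E_{\k}/\E=P(z,x,y)/(1-z^2)^{l-1}$ with $\deg_z P\leqslant 2l-1+k_l$. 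Second, you must remove the poles, and your remark about $\partial_x-\partial_y$ does not do it. Indeed $(\partial_x+\partial_y)E=E\cdot 2z(x+y)/(1+z)$ and $(\partial_x-\partial_y)E=E\cdot 2z(y-x)/(1-z)$ each cancel only one factor of $1-z^2$, and nothing shows your operators have this form. The paper instead proves analytically that $\E_{\k}/\E$ has a finite limit as $z\to1^-$ for real $x>y$ (Lemma~\ref{lemma6} and Corollary~\ref{Corollary2}). This uses that $P_{\k}(u,y)$ vanishes to order $l-1$ at $u=y$, so the ratio stays finite after the $l$ derivatives in $\mathcal M_{\k,x}$. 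The paper then transfers this to $z\to-1$ by the symmetry in Lemma~\ref{lemma1}(iii) and reads off the degree $2l-1+k_l-(2l-2)=k_l+1$. Your per-step heuristic of ``$k_l-k_{l-1}$ plus a bounded amount'' would only give $k_l+O(l)$.
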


Theorem \ref{thm_main} was established by A. M. Pupasov-Maksimov \cite{Pupasov_2015} in the important case where $\k$ is a  Krein-Adler sequence. To explain the significance of this result and the role that Krein-Adler sequences play, let us consider the functions  
\begin{equation}\label{def_fkn}
f_{\k,n}(x):=e^{-x^2/2} 
\frac{H_{\k,n}(x)}{H_{\k}(x)}. 
\end{equation}
It is known that for every strictly increasing sequence $\k \in \n^l$ these functions 
are solutions of the second-order linear differential equation 
\begin{equation}\label{f_kn_ODE}
{\mathcal L} f(x):=-f''(x)+U_{\k}(x) f(x)=(2n+1-2l) f(x),
\end{equation}
where $U_{\k}(x)$ is a rational function defined by
\begin{equation*}
U_{\k}(x):= x^2- 2\partial_x^2 \ln(H_{\k}(x))=
 x^2 - 2\frac{H_{\k}''(x)}{H_{\k}(x)}+
 2\frac{H_{\k}'(x)^2}{H_{\k}(x)^2}.
\end{equation*}
This  result follows from \cite[Proposition 5.2]{Gomez-Ullate_2014} and it is stated in this exact form in the proof of \cite[Proposition 5.5]{Gomez-Ullate_2014}. The differential operator in \eqref{f_kn_ODE} and the solutions of equation \eqref{f_kn_ODE} can be obtained by a sequence of Darboux transformations, see \cite[Theorem 3]{Oblomkov1999} and \cite{Gomez-Ullate_2014}.

As mentioned above, when $\k$ is a Krein-Adler sequence, the polynomial $H_{\k}(x)$ has no real zeros (see \cite[Theorem 4.2]{Gomez-Ullate_2014}) and the polynomials $H_{\k,n}(x)$ are called {\it exceptional Hermite polynomials}.  In this case, the functions $f_{\k,n}(x)$ belong to the Schwartz class (the class of smooth rapidly decreasing functions on $\r$) and satisfy the orthogonality condition
\begin{equation*}
\int_{\r} f_{\k,n}(x) f_{\k,m}(x) \d x= \delta_{n,m} \sqrt{\pi} 2^{n+l} n!  \prod_{j=1}^l (n-k_j), \;\;\; n,m \in {\mathbb Z}_{\geqslant 0} \setminus \k.
\end{equation*}
Moreover, the functions $\{f_{\k,n}(x)\}_{n \in {\mathbb Z}_{\geqslant 0} \setminus \k}$ form a complete orthogonal basis of $L_2(\r, \d x)$. These results can be found in \cite{Gomez-Ullate_2014,Gomez2016}. Thus, when $\k$ is a Krein-Adler sequence, the functions $\{f_{\k,n}(x)\}_{n \in {\mathbb Z}_{\geqslant 0} \setminus \k}$ form a complete eigenbasis for the operator ${\mathcal L}$ in $L_2(\r, \d x)$. This allows one to write down the spectral expansion of the corresponding heat kernel \cite{Avramidi2015,Vassilevich2003} (the integral kernel of the operator $\exp(-t {\mathcal L})$) in the form
\begin{equation}\label{L_heat_kernel}
p(t,x,y)=\sum\limits_{n \in {\mathbb Z}_{\geqslant 0} \setminus \k} 
e^{-(2n+1-2l) t} \; \frac{f_{\k,n}(x) f_{\k,n}(y)}{\lVert f_{\k,n} \rVert^2}=\frac{1}{\sqrt{\pi}} e^{(2l-1)t-(x^2+y^2)/2}
\;\frac{E_{\k}(e^{-2t},x,y)}{H_{\k}(x) H_{\k}(y)}. 
\end{equation}
Thus, the significance of Theorem \ref{thm_main} in the Krein-Adler case is that it gives an explicit expression for the heat kernel of the Schr\"odinger operator ${\mathcal L}$.

\vspace{0.15cm}
\noindent
{\bf Remark 1:} Pupasov-Maksimov \cite{Pupasov_2015} states his results using the probabilists' version of Hermite polynomials, defined by ${\textnormal{He}}_n(x):=2^{-n/2} H_n(x/\sqrt{2})$. Denoting the polynomials appearing in \cite{Pupasov_2015} by $\widetilde Q_n^{\k}(x,y)$,  the relation between these two families of polynomials is  
\[
\widetilde Q_n^{\k}(x,y)=C \times  Q_n^{\k}(x/\sqrt{2}, y/\sqrt{2} ),
\]
where $C$ is a normalization constant that depends only on $\k$. 
\vspace{0.15cm}

In the following three propositions, we state several properties of the polynomials $Q_n^{\k}(x,y)$.

\begin{proposition}\label{proposition1}
${}$
\begin{itemize}
\item[(i)]
For $n=0,1,\dots,k_l+1$ the polynomials $Q_n^{\k}(x,y)$ satisfy
\[
Q^{\k}_n(x,y)=Q_n^{\k}(y,x)=Q^{\k}_n(-x,-y)=(-1)^{\kappa+n} Q^{\k}_n(-x,y),
\] where
$\kappa:=k_1+k_2+\dots+k_l-l(l+1)/2$.
\item[(ii)]
We have 
\begin{equation}\label{eqn_Qk0}
Q^{\k}_0(x,y)=(-2)^{l} \bigg[ \prod\limits_{j=1}^l k_j \bigg] \times H_{\k-1}(x) H_{\k-1}(y), 
\end{equation}
where $\k-1:=(k_1-1,\dots,k_l-1)$.
\item[(iii)]
For $n=1,2,\dots,k_l+1$
the polynomials $Q^{\k}_n(x,y)$ can be computed recursively via 
\begin{equation}\label{Q_m_recursion}
Q^{\k}_n(x,y)=
{\mathbf 1}_{\{n \notin \k\}}
\frac{H_{\k,n}(x) H_{\k,n}(y)}{2^{n+l} n! \prod_{j=1}^l (n-k_j)}-
\sum\limits_{m=1}^{n}  \frac{H_{m}(x) H_{m}(y)}{2^{m} m!}
Q^{\k}_{n-m}(x,y).
\end{equation}
\end{itemize}
\end{proposition}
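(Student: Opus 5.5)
The plan is to derive all three parts from Theorem \ref{thm_main}, i.e. from the identity $E_{\k}=E\cdot Q$ with $Q(z,x,y):=\sum_{n=0}^{k_l+1} z^n Q_n^{\k}(x,y)$. Since $E(z,x,y)$ is analytic and nonvanishing for $|z|<1$, we have $Q=E_{\k}/E$, and $Q$ is uniquely determined as a power series in $z$. Both $E$ and $E_{\k}$ are power series in $z$ whose coefficients are polynomials in $x,y$. The constant coefficient of $E$ is $1$, so $1/E$ is again a power series in $z$ with polynomial coefficients.

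\textbf{Part (iii).} Write $E_{\k}=\sum_n z^n e_n(x,y)$, where $e_n=\mathbf 1_{\{n\notin\k\}} H_{\k,n}(x)H_{\k,n}(y)/(2^{n+l}n!\prod_j(n-k_j))$. Comparing coefficients of $z^n$ in $E_{\k}=E\cdot Q$ gives
\[
e_n=\sum_{m=0}^n \frac{H_m(x)H_m(y)}{2^m m!}\,Q^{\k}_{n-m}(x,y).
\]
The $m=0$ term equals $Q_n^{\k}$, and solving for it yields \eqref{Q_m_recursion} directly.

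\textbf{Part (i).} Each symmetry of $Q_n^{\k}$ follows from the corresponding symmetry of $E$ and $E_{\k}$, together with uniqueness of the coefficients.
\begin{itemize}
\item Symmetry in $x\leftrightarrow y$ is immediate from the definitions of $E$ and $E_{\k}$.
\item For $(x,y)\to(-x,-y)$: $H_m(-x)=(-1)^mH_m(x)$ implies $H_{\j}(-x)=(-1)^{\sigma}H_{\j}(x)$, where $\sigma=\sum j_i - m(m-1)/2$. The sign arises because each derivative row flips sign, so the total sign is $(-1)^{\sum j_i}(-1)^{0+1+\dots+(m-1)}$. In $H_{\k,n}(x)H_{\k,n}(y)$ the two signs cancel, and $E$ is clearly even under this map.
\item For $(x,y)\to(-x,y)$: with $\j=(\k,n)$ of length $l+1$, the sign is $(-1)^{\sum k_j+n-l(l+1)/2}=(-1)^{\kappa+n}$. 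Hence $E_{\k}(z,-x,y)=(-1)^{\kappa}E_{\k}(-z,x,y)$. Also $E(z,-x,y)=E(-z,x,y)$, since only $z$ multiplies $xy$ in the exponent and $z^2$ is unchanged.
\item Therefore $Q(z,-x,y)=(-1)^\kappa Q(-z,x,y)$, and comparing coefficients gives $Q_n^{\k}(-x,y)=(-1)^{\kappa+n}Q_n^{\k}(x,y)$.
\end{itemize}

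\textbf{Part (ii).} Setting $z=0$ gives $Q_0^{\k}=e_0=H_{\k,0}(x)H_{\k,0}(y)/(2^l\prod_j(-k_j))$, because $0\notin\k$. It remains to evaluate $H_{\k,0}=\wr[H_{k_1},\dots,H_{k_l},1]$.
\begin{itemize}
\item Expand along the column of the constant function $1$: only its first entry is nonzero. This reduces $H_{\k,0}$, up to the sign $(-1)^{l}$ coming from its position, to the $l\times l$ Wronskian of the derivatives $H_{k_j}'$.
\item Use $H_k'=2kH_{k-1}$ to get $\wr[H_{k_1}',\dots,H_{k_l}']=2^l\prod_j k_j\,H_{\k-1}(x)$.
\item Then $H_{\k,0}(x)H_{\k,0}(y)=4^l\prod_j k_j^2\,H_{\k-1}(x)H_{\k-1}(y)$, because the sign squares away.
\item Dividing by $2^l(-1)^l\prod_j k_j$ gives $(-2)^l\prod_j k_j\,H_{\k-1}(x)H_{\k-1}(y)$, as claimed.
\end{itemize}

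The only step needing care is the sign bookkeeping for Wronskians under $x\to -x$ in part (i). One must check that the row index shift contributes $(-1)^{l(l+1)/2}$ for an $(l+1)\times(l+1)$ determinant, which matches the definition of $\kappa$.
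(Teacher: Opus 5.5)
Your proposal is correct and follows the paper's proof: comparing coefficients of $z^n$ in $E_{\k}=E\sum_{n} z^n Q_n^{\k}$ gives (ii) and (iii), and the parity relations follow from the symmetries of $E$ and $E_{\k}$ (the paper's Lemma~\ref{lemma1}(iii)). The only difference is that you prove $H_{\k,0}=(-2)^l\big[\prod_j k_j\big]H_{\k-1}$ and $H_{\k,n}(-x)=(-1)^{\kappa+n}H_{\k,n}(x)$ directly, by cofactor expansion and row/column sign bookkeeping, whereas the paper cites G\'omez-Ullate--Grandati--Milson and Bonneux--Stevens for these facts.
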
 

The parity conditions and formula 
\eqref{Q_m_recursion} were established in \cite{Pupasov_2015} for Krein-Adler sequences $\k$.

\begin{proposition}\label{proposition2}
Let $\k \in \n^l$ be a Krein-Adler sequence. Denote 
\begin{align*}
\Omega(x,y)&:=\frac{1}{y-x} \Big( \frac{H'_{\k}(y)}{H_{\k}(y)}
- \frac{H'_{\k}(x)}{H_{\k}(x)} \Big),
\\
\omega(x)&:=\lim_{y\to x} \Omega(x,y)=
 \frac{H_{\k}''(x)}{H_{\k}(x)}-\frac{H_{\k}'(x)^2}{H_{\k}(x)^2}.
\end{align*}
For every $x,y\in \c$, the following identities hold:
\begin{align}
\label{sum1}
\sum\limits_{n=0}^{k_l+1} Q^{\k}_n(x,y)&= H_{\k}(x) H_{\k}(y),\\
\label{sum2}
\sum\limits_{n=0}^{k_l+1} n Q_n^{\k}(x,y)&=- H_{\k}(x) 
H_{\k}(y)\big(  \Omega(x,y)-l \big),\\
\label{sum3}
\sum\limits_{n=0}^{k_l+1} n^2 Q_n^{\k}(x,y)&= H_{\k}(x) 
H_{\k}(y) \times \bigg((\Omega(x,y)-l)^2+\frac{\omega(x)+\omega(y)-2 \Omega(x,y)}{(y-x)^2}  \bigg).
\end{align}
\end{proposition}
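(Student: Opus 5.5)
The plan is to read the three identities as statements about the polynomial $P(z,x,y):=\sum_{n=0}^{k_l+1} z^n Q^{\k}_n(x,y)$ at $z=1$. Indeed $\sum Q_n^{\k}=P(1)$, $\sum nQ^{\k}_n=\partial_zP(1)$ and $\sum n^2Q_n^{\k}=(z\partial_z)^2P|_{z=1}$. I will extract these values from the behaviour of $E_{\k}=E\cdot P$ as $z\to 1^-$. Writing $z=e^{-2t}$, this is the small-time (Hadamard/Minakshisundaram) expansion of the heat kernel \eqref{L_heat_kernel}. Both sides of each identity are rational in $(x,y)$ and become polynomial after multiplication by $H_{\k}(x)H_{\k}(y)$. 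It therefore suffices to prove them for real $x,y$ in an open set, and then extend by analyticity. For Krein-Adler $\k$, $H_{\k}$ has no real zeros, so all quotients by $H_{\k}$ are smooth on $\r^2$.

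\textbf{Step 1 (a PDE for $P$).} Apply $\mathcal L$ in the variable $x$ termwise to $F(z,x,y):=e^{-x^2/2}E_{\k}(z,x,y)/H_{\k}(x)$. Each term is $z^n f_{\k,n}(x)\times(\text{function of }y)$, so \eqref{f_kn_ODE} gives
\[
\mathcal L_x F=(2z\partial_z+1-2l)F .
\]
Termwise differentiation is justified by the local uniform convergence from Lemma~\ref{lemma1}. The same computation with $\k=\varnothing$ gives $-\partial_x^2 G+x^2G=(2z\partial_z+1)G$ for $G:=e^{-x^2/2}E$. Now substitute $F=G\cdot R$ with $R:=P/H_{\k}(x)$. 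Using the explicit Mehler form of $E$, in particular $\partial_x\ln G=-x+\tfrac{2zy-2z^2x}{1-z^2}-x$ up to the correct bookkeeping, this yields a first-order-in-$z$ PDE for $R$. Its principal part as $z\to1$ is
\[
-R_{xx}-2(\partial_x\ln G)R_x+V(x)R=(2z\partial_z-2l)R,\qquad V:=-2(\ln H_{\k})''.
\]

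\textbf{Step 2 (expansion at $z=1$).} Set $s=1-z$ and expand $R=\sum_{j\ge0} r_j(x,y)s^j$; this is legitimate because $P$ is a polynomial in $z$. The term $\partial_x\ln G$ contains $\frac{2(y-x)}{s(2-s)}$ plus regular terms. Collecting powers of $s$ produces transport equations of the form
\[
(y-x)\partial_x r_j + c_j\, r_j = (\text{expression in } r_0,\dots,r_{j-1}),
\]
with explicit constants $c_j$. The lowest one is $(y-x)\partial_x r_0=0$. Since $r_0(\cdot,y)$ is smooth at $x=y$, the solution is determined by its value on the diagonal. By symmetry in $x,y$ (Proposition~\ref{proposition1}(i)) the same equations hold in $y$. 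This forces $r_0=c\,H_{\k}(y)/1$, and after multiplying back by $H_{\k}(x)$ we get $P(1)=cH_{\k}(x)H_{\k}(y)$. To fix $c=1$, I will use the diagonal behaviour $x=y$, $t\to0$. On the diagonal, both heat kernels behave like $(4\pi t)^{-1/2}$, because the leading Hadamard coefficient is $1$ for any smooth potential; comparing via \eqref{L_heat_kernel} gives $c=1$. This is where the Krein-Adler hypothesis is used: the spectral expansion identifies $E_{\k}$ with the true heat kernel. This proves \eqref{sum1}.

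\textbf{Step 3 (first and second orders).} The equations for $r_1$ and $r_2$ have right-hand sides involving $V$, $r_0$ and $r_1$. I will solve them by integrating along the segment from $x$ to $y$, which is the standard Hadamard recursion. For $r_1$, the right-hand side is essentially $V r_0$, and $\int_0^1 V(x+u(y-x))\,du=-2\Omega(x,y)$; this gives $\partial_zP(1)$ and hence \eqref{sum2}, with the $-l$ term coming from the $-2l$ and the $e^{(2l-1)t}$ factor. For $r_2$, the right-hand side contains $V r_1$ and $\partial_x^2 r_1$. Computing $\int_0^1 u\,V(\cdot)\,du$ and the corresponding derivative terms through $(\ln H_{\k})'$ at the endpoints should produce exactly $(\Omega-l)^2+\frac{\omega(x)+\omega(y)-2\Omega}{(y-x)^2}$. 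The last expression is a second divided difference of $(\ln H_{\k})'$, which is what the double integral of $V$ reduces to. Finally, I will convert the $s$-coefficients into $\sum nQ_n$ and $\sum n^2Q_n$; note that $(z\partial_z)^2$ at $z=1$ equals $\partial_z^2+\partial_z$.

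The main obstacle is Step 3 at second order. The bookkeeping of the non-singular parts of $\partial_x\ln G$ and of the conversion $s\leftrightarrow t$ is delicate, and so is showing that the segment integrals collapse to the stated closed form in terms of $\Omega$ and $\omega$. I would first verify the formula in the simplest case $\k=(1,2)$ by direct computation to calibrate the constants, and then carry out the general integration.
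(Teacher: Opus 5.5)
Your plan is correct and goes through once one slip is fixed. It is organised differently from the paper's proof.

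\textbf{What the paper does.} It quotes the second-order short-time Hadamard expansion, with the explicit coefficients $u_1,u_2$ valid for any smooth potential. It applies this to $-\partial_x^2+x^2$ and to $-\partial_x^2+U_{\k}+2l$, and reads \eqref{sum1}--\eqref{sum3} off the ratio $p^{(2)}/p^{(1)}=\sum_n e^{-2nt}Q^{\k}_n/(H_{\k}(x)H_{\k}(y))$. The Krein--Adler hypothesis is therefore used at every order.

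\textbf{What you do instead.} You rederive the transport equations in this special setting, using the Mehler kernel as an exact parametrix and the polynomiality of $P$ from Theorem~\ref{thm_main}. Regularity at $x=y$ and the $x\leftrightarrow y$ symmetry then fix everything up to one constant $c$. The heat kernel enters only through its leading diagonal behaviour, which sets $c=1$.

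\textbf{The slip.} The correct formula is $\partial_x\ln G=-x+2z(y-zx)/(1-z^2)$, without your extra $-x$. That extra term would add a spurious $2x\,\partial_x r_1$ to the order-$s$ equation. With the correct formula, $\partial_x\ln G=\frac{y-x}{s}-\frac{y-x}{2}-\frac{x+y}{4}s+O(s^2)$, and your equation for $R$ is exact, not just a principal part. Using $\partial_xr_0=0$ one gets
\[
(y-x)\partial_xr_1-r_1=(l-\omega(x))r_0,\qquad (y-x)\partial_xr_2-2r_2=\tfrac12\big[(l-\omega(x))r_0-\partial_x^2r_1+(2l-1-2\omega(x))r_1\big].
\]

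\textbf{The second order closes.} The homogeneous solutions are multiples of $(y-x)^{-j}$, so the solutions regular at $x=y$ are
$r_1=r_0(\Omega-l)$ and $r_2=\tfrac{r_0}{2}\big[(\Omega-l)^2+(\Omega-l)+\Theta\big]$, where $\Theta:=(\omega(x)+\omega(y)-2\Omega)/(y-x)^2$. Since $(z\partial_z)^2|_{z=1}=\partial_s^2-\partial_s$, these give exactly \eqref{sum3}. The $-l$ in \eqref{sum2} comes only from the $-2l$ in your equation; the factor $e^{(2l-1)t}$ plays no role in your framework.

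\textbf{What your route buys.} Its algebraic part never uses the Krein--Adler hypothesis. The eigenvalue equation, Theorem~\ref{thm_main} and the symmetry $Q^{\k}_n(x,y)=Q^{\k}_n(y,x)$ hold for every strictly increasing $\k$, and regularity at $x=y$ only needs $H_{\k}(y)\neq 0$. So for every such $\k$ the three sums equal $c(\k)$ times the right-hand sides of \eqref{sum1}--\eqref{sum3}. This reduces Conjecture~1 to showing $c(\k)=1$. Proposition~\ref{proposition3} already gives $c((k))=1$, which settles the case $l=1$. The paper's route is shorter, but it gives no information outside the Krein--Adler case.
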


Formula \eqref{sum1} for general $x$ and $y$ and formula \eqref{sum2} for the special case $x=y$ were first established in \cite{Pupasov_2015}.

\begin{proposition}\label{proposition3}
Let $k\in \n$ and $\k=(k)$, so that $l=1$. Then 
\begin{equation}\label{Q_n_l=1}
Q_n^{\k}(x,y)={\mathbf 1}_{\{n\geqslant 1\}} 2^{n-1} (k)_{n-1} H_{k+1-n}(x) H_{k+1-n}(y) - 2^{n+1} (k)_{n+1} 
H_{k-1-n}(x) H_{k-1-n}(y),
\end{equation}
where $(r)_n:=r(r-1)\dots(r-n+1)$ denotes the falling factorial. 
\end{proposition}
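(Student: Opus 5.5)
The plan is to compute $E_{\k}$ in closed form for $\k=(k)$ and compare with \eqref{eqn_main}. Since the $Q_n^{\k}$ are uniquely determined by the recursion \eqref{Q_m_recursion} of Proposition \ref{proposition1}(iii), any explicit representation of $E_{(k)}/E$ as a polynomial in $z$ of degree $k+2$ identifies them.

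\textbf{Step 1: the Wronskian.} For $l=1$, using $H_n'=2nH_{n-1}$ we have
\[
H_{k,n}=H_kH_n'-H_k'H_n=2(nH_kH_{n-1}-kH_{k-1}H_n).
\]
Then
\[
E_{(k)}(z,x,y)=\sum_{n\ne k} z^n\frac{4\,(nH_k(x)H_{n-1}(x)-kH_{k-1}(x)H_n(x))(\cdots)_y}{2^{n+1}n!(n-k)}.
\]
The factor $1/(n-k)$ is the obstruction. To remove it, write $\frac{z^n}{n-k}=z^k\int_0^z s^{n-k-1}\,\d s$ for $n>k$, with the analogous formula for $n<k$. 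Alternatively, and this is what I would commit to, apply the operator $D:=z\partial_z-k$. This gives
\[
(z\partial_z-k)E_{(k)}=\sum_{n\ne k}z^n\frac{H_{k,n}(x)H_{k,n}(y)}{2^{n+1}n!}.
\]
The right side is a sum of four bilinear Hermite series, and each of these is expressible through $E$ and its $x,y$-derivatives: for example, $\sum z^n H_{n-1}(x)H_{n-1}(y)\,n/(2^nn!)=\tfrac z2 E$ after an index shift, and $\sum z^nH_{n-1}(x)H_n(y)/(2^{n}(n-1)!)=\tfrac z2\partial_y E$ up to constants. We must also subtract the omitted $n=k$ term, which vanishes anyway since $H_{k,k}=0$. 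The outcome is $DE_{(k)}=E\cdot R$, where $R$ is an explicit polynomial in $z,x,y$ built from $H_k,H_{k-1}$ and the rational factors coming from $\partial_xE/E=\frac{2zy-2z^2x}{1-z^2}$.

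\textbf{Step 2: verify the candidate.} Set $P(z):=\sum_{n=0}^{k+2}z^nQ_n(x,y)$ with $Q_n$ as in \eqref{Q_n_l=1}. Instead of integrating the ODE, I would check directly that $(z\partial_z-k)(EP)=ER$, that is,
\[
zP'(z)-kP+z\frac{\partial_zE}{E}P=R.
\]
Here $\partial_zE/E$ is explicit rational in $z$, so after multiplying by $(1-z^2)^2$ this becomes a polynomial identity in $z$. Comparing coefficients reduces it to three-term Hermite identities, namely $H_{m+1}=2xH_m-2mH_{m-1}$ and its bilinear products in $x$ and $y$. The ODE determines $E_{(k)}$ up to adding $cz^kE$-type solutions, i.e.\ up to a multiple of $z^k$ times the homogeneous solution $z^k\cdot(\text{function of }x,y)$. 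That ambiguity is fixed by the $z^k$ Taylor coefficient, which we match using Proposition \ref{proposition1}(iii) (equivalently, the coefficient of $z^k$ in $E_{(k)}$ is $0$ since $k\in\k$). Note also that $Q_0$ agrees with \eqref{eqn_Qk0}: $-4kH_{k-1}(x)H_{k-1}(y)$.

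\textbf{Alternative and main obstacle.} A cleaner route is to prove \eqref{Q_n_l=1} by induction on $n$ directly from \eqref{Q_m_recursion}. This requires evaluating the sums
\[
\sum_{m}\frac{H_m(x)H_m(y)}{2^mm!}\,2^{n-m}(k)_{n-m}H_{k+1-n+m}(x)H_{k+1-n+m}(y),
\]
which is harder. So I would use the ODE method. The main obstacle is Step 2's coefficient bookkeeping: the products $H_a(x)H_a(y)$ with shifted indices must telescope. I expect the key cancellation to come from the identity
\[
H_{k,n}\text{-type combination}=\text{Christoffel--Darboux numerator},
\]
which would show that $\sum_{n} \frac{H_n(x)H_n(y)}{2^nn!}\cdot$ weights truncates. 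If the bookkeeping becomes unwieldy, I would first verify the identity in the special case $x=y$ using Proposition \ref{proposition2}-style sums as sanity checks.
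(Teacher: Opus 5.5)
Your Step 1 is correct, and it can be written more compactly: since $H_{k,n}(x)=\big(H_k(x)\partial_x-H_k'(x)\big)H_n(x)$,
\[
(z\partial_z-k)E_{(k)}(z,x,y)=\tfrac12\big(H_k(x)\partial_x-H_k'(x)\big)\big(H_k(y)\partial_y-H_k'(y)\big)E(z,x,y).
\]
So the $n^2$-term is $\tfrac12H_k(x)H_k(y)\,\partial_x\partial_yE$, and $R$ is rational in $z$ with denominator $(1-z^2)^2$; it is not a polynomial. The uniqueness step is also fine, provided you check \emph{for your candidate} that $\sum_{m=0}^{k}\frac{H_m(x)H_m(y)}{2^mm!}Q_{k-m}=0$; the terms cancel in pairs. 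Citing Proposition 1(iii) does not give this, because it concerns the true $Q^{\k}_n$. (If you allow Theorem 1, no matching is needed, since $c(x,y)z^k/E$ is not a polynomial in $z$ unless $c=0$.)

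The real gap is that the proposal stops exactly where the proof's content begins. The identity $zP'-kP+z(\partial_z\log E)P=R$ is only asserted to ``reduce to three-term Hermite identities''. After clearing $(1-z^2)^2$ it becomes a five-term recurrence in the $Q_n$ with coefficients involving $xy$ and $x^2+y^2$. Rewriting $xy\,H_j(x)H_j(y)$ and $(x^2+y^2)H_j(x)H_j(y)$ by the three-term recurrence produces cross terms $H_a(x)H_b(y)$ with $a\neq b$. Their cancellation must be checked uniformly in $n$ and $k$, and the lowest powers of $z$ need separate inhomogeneous checks. None of this is carried out, and neither the Christoffel--Darboux heuristic nor a sanity check at $x=y$ would establish it.

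What is missing is a structural handle that makes this verification tractable; the paper gets it by inducting on $n$ and $k$ simultaneously. Write $h_m=H_m(x)H_m(y)$. Your candidate satisfies $Q^{k}_n=2kQ^{k-1}_{n-1}+{\mathbf 1}_{\{n=1\}}h_k-2k{\mathbf 1}_{\{n=0\}}h_{k-1}$, and this tames the convolution sums you set aside. Indeed, $L^k_n:=\sum_{m=0}^n\frac{h_m}{2^mm!}Q^k_{n-m}$ obeys $L^k_n=2kL^{k-1}_{n-1}+\frac{nh_kh_{n-1}-kh_{k-1}h_n}{2^{n-1}n!}$. The target $R^k_n={\mathbf 1}_{\{n\neq k\}}\frac{H_{k,n}(x)H_{k,n}(y)}{2^{n+1}n!(n-k)}$ obeys the same recursion, because of the bilinear identity $H_{k,n}(x)H_{k,n}(y)=4n(n-k)h_kh_{n-1}-4k(n-k)h_{k-1}h_n+4nk\,H_{k-1,n-1}(x)H_{k-1,n-1}(y)$. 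That identity follows from $H_{k,n}=2(nH_kH_{n-1}-kH_{k-1}H_n)$ and $H_{k-1,n-1}=H_kH_{n-1}-H_{k-1}H_n$. Only $n=0$ and $(n,k)=(1,0)$ then need direct checking, and Theorem 1 with Proposition 1(iii) identifies the $Q_n$.

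Your ODE framework could be completed the same way. With $P_k(z):=\sum_n z^nQ^k_n$, the recursion reads $P_k(z)=zh_k-2kh_{k-1}+2kzP_{k-1}(z)$, which suggests induction on $k$, and your route would have the merit of not relying on Theorem 1. As written, though, it is a deferred computation rather than a proof.

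Two minor slips: $\sum_n z^nH_{n-1}(x)H_n(y)/(2^n(n-1)!)=\tfrac12\partial_xE$ (no factor $z$, and the derivative is in $x$). Also $Q_0=-2kH_{k-1}(x)H_{k-1}(y)$, not $-4kH_{k-1}(x)H_{k-1}(y)$.
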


We also state several conjectures, which we have verified numerically for many cases of strictly increasing sequences $\k \in \n^l$.

\vspace{0.2cm}
\noindent
{\bf Conjecture 1:}  Identities \eqref{sum1}-\eqref{sum3} hold for any strictly increasing sequence $\k \in \n^l$, not only for Krein-Adler sequences.

\vspace{0.2cm}
\noindent
{\bf Conjecture 2:} 
For $l\geqslant 2$ and any strictly increasing sequence $\k \in \n^l$, the following identity holds:
\begin{equation*}
Q^{\k}_{k_l+1} (x,y)=2^{k_l+l-1}
(k_l)!  \bigg[ \prod\limits_{j=1}^{l-1} (k_l-k_j) \bigg] \times H_{\tilde \k}(x) H_{\tilde \k}(y),
\end{equation*}
where $\tilde \k=(k_1,k_2,\dots,k_{l-1})$. 

\vspace{0.2cm}
\noindent
{\bf Conjecture 3:} When $l=2$ and $\k$ is a Krein-Adler sequence $\k=[k,k+1]$ (with $k \in \n$), then 
\begin{align*}
Q^{\k}_{k+1}(x,y)&=
2^{k+1} k! k \Big( H_{k+1}(x) H_{k+1}(y) 
+ 2 (k+1) \big[ H_{k+1}(x) H_{k-1}(y)\\
&\qquad \qquad +H_{k+1}(y) H_{k-1}(x) \big] + 4 k (k+1) H_{k-1}(x) H_{k-1}(y) \Big).
\end{align*}

\vspace{0.2cm}
\noindent
{\bf Conjecture 4:} 
The polynomials $Q^{\k}_n(x,y)$ have integer coefficients. A stronger conjecture is that the coefficients of the polynomials $Q^{\k}_n(x/2,y/2)$ are integers that are divisible by 
\[
 \prod\limits_{1\leqslant i < j \leqslant l} (k_i-k_j)^{2}.
 \]
 
Foata \cite{Foata1978} gave a combinatorial  proof of the classical Mehler formula. If true, Conjecture 4  may indicate that Theorem \ref{thm_main} could be proved by combinatorial methods and may suggest a potential combinatorial interpretation of the polynomials $Q_n^{\k}(x,y)$.

We present the proofs of all our results in the next section. 
Our proof of Theorem \ref{thm_main} follows the same general strategy as the proof of Pupasov-Maksimov \cite{Pupasov_2015} in the case of Krein-Adler sequences. However, the Krein-Adler condition, which implies the orthogonality and completeness of $f_{\k,n}$ in $L_2(\r, \d x)$, played an essential role in the proof in \cite{Pupasov_2015}. For the general case, we therefore had to modify the argument substantially
and introduce several new ideas.  

\section{Proofs}\label{section_proofs}

We denote by $\psi_n(x):=e^{-x^2/2} H_n(x)$ the non-normalized Hermite functions. If $n<0$  we set $H_n(x)\equiv 0$ and $\psi_n(x) \equiv 0$. For $\j=(j_1,j_2,\dots,j_m) \in \z_{\geqslant 0}^m$ we denote $\psi_{\j}(x):=\wr[\psi_{j_1}(x), \dots,\psi_{j_m}(x)]$ and
\[
|\,\j\,|:=j_1+\dots+j_m.
\]
For a strictly increasing sequence $\k \in \n^l$ and $n \in {\mathbb Z}_{\geqslant 0}$ we denote $\psi_{\k,n}(x):=\psi_{\j}(x)$, where $\j=(k_1,k_2,\dots,k_l,n)$. By the following well-known property of Wronskian determinants:
\begin{equation}\label{multiplication_identity}
\wr[f(x) g_1(x),f(x) g_2(x) \dots,f(x) g_{m}(x)]=f(x)^m \wr[ g_1(x), g_2(x) \dots, g_{m}(x)],
\end{equation}
we have 
\begin{equation}\label{psi_k_H_k}
\psi_{\k}(x)=e^{-l x^2/2} H_{\k}(x), \;\;\; \psi_{\k,n}(x)=e^{-(l+1) x^2/2} H_{\k,n}(x).
\end{equation}
In what follows, we work with the bilinear generating functions of $\psi_n$ and $\psi_{\k,n}$, defined by 
\begin{equation}\label{def_mathcal_E}
{\mathcal E}(z,x,y):=\sum\limits_{n\geqslant 0} z^n \frac{\psi_n(x) \psi_n(y)}{2^n n!}=
\frac{1}{\sqrt{1-z^2}} \exp\bigg( \frac{4xyz-(1+z^2)(x^2+y^2)}{2(1-z^2)} \bigg),
\end{equation}
and
\begin{equation}\label{def_mathcal_Ek}
{\mathcal E}_{\k}(z,x,y):=\sum\limits_{n \in \z_{\geqslant 0} \setminus \k}  z^n \frac{\psi_{\k,n}(x) \psi_{\k,n}(y)}{2^{n+l} n! \prod\limits_{j=1}^l (n-k_j)}.
\end{equation} 
From \eqref{psi_k_H_k}, we have 
\begin{equation}\label{mathcal_E_E}
{\mathcal E}(z,x,y)=e^{-(x^2+y^2)/2} E(z,x,y), \;\;\; 
{\mathcal E}_{\k}(z,x,y)=e^{-(l+1)(x^2+y^2)/2} E_{\k}(z,x,y).
\end{equation}

Before proving Theorem \ref{thm_main}, we need to establish several auxiliary results.

\begin{lemma}\label{lemma1}
${}$
\begin{itemize}
\item[(i)]
There exists $C=C(\k)>0$ such that for all $n\geqslant 1$ and $x\in \c$
\begin{equation}\label{psi_upper_bound}
|\psi_{\k,n}(x)|\leqslant C n^l \big | e^{-l x^2/2}\big | \times (1+|x|^{|\k|}) \times \sum\limits_{j=0}^l 
|\psi_{n-j}(x)|.
\end{equation}
\item[(ii)]
 The function $\E_{\k}(z,x,y)$ 
is an analytic function of three variables in the domain $|z|<1$, $x,y\in \c$. 
\item[(iii)] The function $\E_{\k}(z,x,y)$  satisfies the following symmetry conditions:
\begin{equation}\label{E_k_symmetries}
\E_{\k}(z,x,y)=\E_{\k}(z,y,x)=\E_{\k}(z,-x,-y)=(-1)^{\kappa} \E_{\k}(-z,-x,y),  \;\;\; |z|<1, \; x,y\in \c, 
\end{equation}
where $\kappa=|\,\k\,|-l(l+1)/2$.
\end{itemize} 
\end{lemma}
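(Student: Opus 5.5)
For part (i), I will expand the Wronskian $\psi_{\k,n}=\wr[\psi_{k_1},\dots,\psi_{k_l},\psi_n]$ along its last column, which is $(\psi_n,\psi_n',\dots,\psi_n^{(l)})^T$. This gives
\[
\psi_{\k,n}(x)=\sum_{i=0}^{l} c_i(x)\,\psi_n^{(i)}(x),
\]
where each $c_i(x)$ is, up to sign, an $l\times l$ minor of the matrix $[\psi_{k_j}^{(r)}]$. Since $\psi_k^{(r)}(x)=e^{-x^2/2}\times(\text{polynomial of degree } k+r)$, each minor equals $e^{-lx^2/2}$ times a polynomial whose degree is at most $|\k|+O(l^2)$. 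Hence $|c_i(x)|\le C|e^{-lx^2/2}|(1+|x|^{N})$ for some $N$. If a higher power than $|\k|$ appears, I will track the degrees more carefully: the minor obtained by deleting row $i$ is a generalized Wronskian, and its degree is at most $|\k|-\binom{l}{2}+$(shift), which is $\le |\k|$. Any bound of that polynomial shape suffices for (ii) in any case.

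Next, to control $\psi_n^{(i)}$, I will use the ladder relation $\psi_n'=-x\psi_n+2n\psi_{n-1}$, which follows from $H_n'=2nH_{n-1}$. Iterating it $i\le l$ times expresses $\psi_n^{(i)}$ as a combination of $\psi_{n-j}$, $0\le j\le i$, with polynomial coefficients in $x$ of degree $\le i$ and coefficients in $n$ of size $O(n^i)$. Combining this with the bound on the $c_i$ gives \eqref{psi_upper_bound}, after adjusting the power of $x$ by the same degree bookkeeping.

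For part (ii), each term of \eqref{def_mathcal_Ek} is entire in $(x,y)$. By (i), on a compact set $|x|,|y|\le R$, $|z|\le r<1$, the $n$-th term is bounded by
\[
C_R\, r^n n^{2l}\,\frac{\sum_{j,j'}|\psi_{n-j}(x)\psi_{n-j'}(y)|}{2^n n!\,\prod_j|n-k_j|}.
\]
I will then use the standard Hermite bound $|H_m(x)|\le C\, 2^{m/2}\sqrt{m!}\,e^{c|x|\sqrt m}$ (or simply $|H_m(x)|\le H_m(i|x|)$ together with the generating function). With it the ratio $\frac{2^{(n-j)/2}\sqrt{(n-j)!}\,2^{(n-j')/2}\sqrt{(n-j')!}}{2^n n!}$ is $O(1)$, so the series is dominated by $\sum r^n n^{2l} e^{2cR\sqrt n}<\infty$. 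Weierstrass's theorem then gives joint analyticity. Obtaining a clean complex-$x$ Hermite bound is the main technical point; I would cite Cramér-type or Plancherel–Rotach estimates, or alternatively bound via Cauchy estimates on $E$ at $z$ slightly larger than $r$.

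For part (iii), symmetry in $x\leftrightarrow y$ is immediate from \eqref{def_mathcal_Ek}. For parity, $\psi_m(-x)=(-1)^m\psi_m(x)$, and differentiation flips sign, so the $r$-th row of the Wronskian picks up $(-1)^r$. Therefore
\[
\psi_{\k,n}(-x)=(-1)^{|\k|+n}(-1)^{0+1+\dots+l}\psi_{\k,n}(x)=(-1)^{\kappa+n}\psi_{\k,n}(x).
\]
Applying this to both $x$ and $y$ gives invariance under $(x,y)\to(-x,-y)$. Applying it to $x$ alone, the factor $(-1)^{\kappa+n}$ is absorbed by replacing $z$ with $-z$, which yields the last identity in \eqref{E_k_symmetries}.
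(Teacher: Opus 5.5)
Your proof of (i) does not reach the exponent $|\k|$ in \eqref{psi_upper_bound}; parts (ii) and (iii) are correct.

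The trouble comes from expanding the $\psi$-Wronskian. Your cofactors $c_i$ are minors of $[\psi_{k_j}^{(r)}]$, whose entries have polynomial parts of degree $k_j+r$; here differentiation \emph{raises} degrees. So your claim that these minors have degree at most $|\k|$ is false. For $l=1$, $c_0=-\psi_k'=e^{-x^2/2}(xH_k-2kH_{k-1})$ has degree $k+1=|\k|+1$. For $l=2$, $c_0=\wr[\psi_{k_1}',\psi_{k_2}']$ is $e^{-x^2}$ times a polynomial with leading term $2^{k_1+k_2}(k_2-k_1)x^{|\k|+1}$.

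In addition, the ladder relation puts coefficients of $x$-degree up to $i$ in front of $\psi_n$ in $\psi_n^{(i)}$. So for $l\in\{1,2\}$ a term-by-term triangle inequality yields the factor $1+|x|^{|\k|+1}$, which is not $O(1+|x|^{|\k|})$ on $\c$. The sharp bound holds only because of cancellations your estimate cannot see. For $l=1$, for instance, $c_0\psi_n+c_1\psi_n'=e^{-x^2/2}(2nH_k\psi_{n-1}-2kH_{k-1}\psi_n)$, with the $xH_k\psi_n$ terms cancelling.

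The remedy is the paper's route:
\begin{itemize}
\item First pull out $e^{-(l+1)x^2/2}$ via \eqref{multiplication_identity}, then expand $\wr[H_{k_1},\dots,H_{k_l},H_n]$ along its last column.
\item The cofactors $p_j$ are then minors of $[\partial_x^m H_{k_i}]$, where differentiation lowers degrees, so $\deg p_j\leqslant |\k|-l(l+1)/2+j\leqslant|\k|$.
\item Finally, $\partial_x^jH_n=2^j n(n-1)\cdots(n-j+1)H_{n-j}$ together with $e^{-(l+1)x^2/2}H_{n-j}=e^{-lx^2/2}\psi_{n-j}$ gives \eqref{psi_upper_bound} with no extra powers of $x$.
\end{itemize}
As you observe, a larger exponent would still suffice for (ii) and for the later use in Lemma~\ref{lemma3}. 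It is not, however, what (i) asserts.

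For (ii), your complex Hermite bound is exactly what the Szeg\H{o} asymptotic used in the paper supplies. Cram\'er's inequality alone is a real-variable statement, so it would not suffice by itself. Your comparison $|H_m(x)|\leqslant|H_m(i|x|)|$, combined with the diagonal Mehler formula and Cauchy--Schwarz, is a valid and more elementary alternative. For (iii), your direct parity computation for the Wronskian (row $r$ contributes $(-1)^r$, the columns $(-1)^{k_j}$ and $(-1)^n$) is a self-contained replacement for the paper's citation of Bonneux--Stevens.
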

\begin{proof}
To prove (i), we start with the formula 
\[
\psi_{\k,n}(x)=e^{-(l+1)x^2/2} \wr[H_{k_1}(x),\dots,H_{k_l}(x),H_n(x)],
\]
and expand the Wronskian determinant along the last column, which contains the derivatives of $H_n(x)$:
\begin{equation}\label{psi_k_n_sum_derivatives}
\psi_{\k,n}(x)=e^{-(l+1)x^2/2} \sum\limits_{j=0}^l p_{j}(x) \partial_x^j H_n(x),
\end{equation}
where
\[
p_{j}(x)=(-1)^{j+l} {\textnormal{det}}
\Big[ \partial_x^m H_{k_i}(x) \Big]_{\substack{1\leqslant i \leqslant l \\ 0\leqslant m \leqslant l \\ m\neq j}}.
\]
It is clear from the above determinant expression that $p_{j}$ is a polynomial of degree not exceeding $|\, \k \, |$. This, combined with the formula 
\[
\partial_x^j H_m(x)=2^j m(m-1)\dots(m-j+1) H_{m-j}(x),
\]
implies the upper bound \eqref{psi_upper_bound}.

The proof of item (ii) requires the following asymptotic result (see \cite{Szego}[Theorem 8.22.7]):  as  $n\to +\infty$
\[
\psi_n(x) = \frac{2^n}{\sqrt{\pi}} \Gamma((n+1)/2) \Big[ \cos(x \sqrt{2n+1} - n\pi/2)+O\big(n^{-1/2} \exp\big( |\im(x)|\sqrt{2n+1}\big)\big)\Big], 
\]
uniformly in $x$ on compact subsets of $\c$. We also need the fact that 
\[
\frac{\Gamma((n+1)/2)^2}{n!}=2^{-n} \sqrt{\frac{2\pi}{n}}  (1+o(1)), \;\;\; n\to +\infty,
\]
which follows from the Legendre duplication formula for the gamma function. 
Combining the above two facts with \eqref{psi_upper_bound}, we conclude that for each compact subset $A \subset \c$ there exists a constant $C_1=C_1(A,\k)>0$ such that the absolute value of each term in the infinite series \eqref{def_mathcal_Ek} is bounded above by 
\[
C_1 |z|^n n^{l-1/2} \exp\big(\sqrt{2n+1} (|\im(x)|+|\im(y)|)\big), 
\]
for $x,y \in A$. This implies that the series \eqref{def_mathcal_Ek} converges uniformly on compact subsets of $\{(z,x,y) \in \c^3 \; : \; |z|<1\}$
and defines an analytic function in this domain.

According to Lemma 2.1 and Lemma 3.6 in \cite{Bonneux2018}, the polynomial $H_{\k,n}(x)$ has degree $\kappa+n$ and satisfies the parity condition $H_{\k,n}(-x)=(-1)^{\kappa+n} H_{\k,n}(x)$. 
The symmetry conditions in item (iii) follow immediately from this result and formulas \eqref{psi_k_H_k}, \eqref{def_mathcal_E} and \eqref{def_mathcal_Ek}.
\end{proof}

\begin{lemma}\label{lemma2}
Assume that $U(x)$, $g_1(x)$ and $g_2(x)$ are smooth functions on $(-\infty, c)$ and that
\begin{equation}\label{f_i_equation}
 -g_i''(x)+U(x) g_i(x)=\lambda_i g_i(x), \;\;\;  x \in (-\infty, c), \;\;\; i\in \{1,2\}. 
\end{equation}
 Assume further that $\lambda_1\neq \lambda_2$ and $\wr[g_1(x),g_2(x)] \to 0$ as $x\to -\infty$. Then, for any $x\in (-\infty, c)$, we have
\[
\int_{-\infty}^x g_1(y)g_2(y) \d y = \frac{1}{ \lambda_1-\lambda_2} \wr[g_1(x),g_2(x)].
\]
\end{lemma}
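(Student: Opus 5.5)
The plan is to use the classical Lagrange-type identity for the Wronskian of two solutions of Schr\"odinger equations with the same potential. Write $W(x):=\wr[g_1(x),g_2(x)]=g_1(x)g_2'(x)-g_1'(x)g_2(x)$ and differentiate:
\[
W'(x)=g_1(x)g_2''(x)-g_1''(x)g_2(x).
\]
From \eqref{f_i_equation} we have $g_i''=(U-\lambda_i)g_i$. Substituting, the terms containing $U$ cancel and we obtain
\[
W'(x)=g_1(x)\big(U(x)-\lambda_2\big)g_2(x)-\big(U(x)-\lambda_1\big)g_1(x)g_2(x)=(\lambda_1-\lambda_2)\,g_1(x)g_2(x).
\]
This computation needs only the smoothness of $U$ and $g_i$ on $(-\infty,c)$.

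Next, fix $x<c$ and any $a<x$, and integrate the identity over $[a,x]$. By the fundamental theorem of calculus,
\[
\int_a^x g_1(y)g_2(y)\,\d y=\frac{W(x)-W(a)}{\lambda_1-\lambda_2},
\]
which uses $\lambda_1\neq\lambda_2$. Now let $a\to-\infty$. By hypothesis $W(a)\to 0$, so the right-hand side converges to $W(x)/(\lambda_1-\lambda_2)$. Hence the limit of the left-hand side exists, and this limit is by definition the (improper) integral $\int_{-\infty}^x g_1g_2\,\d y$. That yields the claimed formula.

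The only point needing care is how to interpret the integral. No absolute integrability of $g_1g_2$ near $-\infty$ is assumed, so the integral must be read as the improper Riemann limit $\lim_{a\to-\infty}\int_a^x$. The argument above shows that this limit exists, so no separate convergence hypothesis is needed. If absolute integrability happens to hold in an application, the Lebesgue integral coincides with this limit. Apart from that, the proof is a direct computation.
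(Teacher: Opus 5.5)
Your proof is correct and follows the paper's argument: the paper likewise rests on the identity $\frac{\d}{\d x}\wr[g_1(x),g_2(x)]=(\lambda_1-\lambda_2)\,g_1(x)g_2(x)$, obtained by substituting the two equations, and then integrates using the vanishing of the Wronskian at $-\infty$. Your remark that the integral is to be read as an improper limit, whose existence the argument itself establishes, is a sensible clarification that the paper leaves implicit.
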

\begin{proof} The proof is based on the identity
\[
\frac{\d}{\d x}\wr[g_1(x),g_2(x)] =  (\lambda_1-\lambda_2) g_1(x)g_2(x),
\]
which follows from \eqref{f_i_equation}.
\end{proof}

Let ${\mathcal L}_{\k,x}$ be the first-order  differential operator acting on the $x$-variable, defined by
\[
{\mathcal L}_{\k,x}h(x):=\wr[ \psi_{\k}(x), h(x)]=\psi_{\k}(x) h'(x)
-\psi_{\k}'(x) h(x),
\]
where $h$ is a smooth function on $\r$. 
For a strictly increasing sequence $k=(k_1,\dots,k_l)\in \n^l$ we denote by $\k \setminus k_j$ the sequence of length $l-1$ obtained from $\k$ by removing the term $k_j$.  We also denote 
\[
\tilde \k:=\k \setminus k_l=(k_1,\dots,k_{l-1}).
\] The Wronskian identity
\begin{equation}\label{Wronskian_identity}
\wr[g_1,\dots,g_{j}]\times \wr[g_1,\dots,g_{j},h,f] 
=\wr\Big[\wr[g_1,\dots,g_{j},h],\wr[g_1,\dots,g_{j},f] \Big]
\end{equation}
 implies the following key result
\begin{equation}\label{eqn_psi_kn_L_k}
\psi_{\tilde \k}(x) \times \psi_{\k,n}(x)= {\mathcal L}_{\k,x} \psi_{\tilde \k,n}(x).
\end{equation}

\begin{lemma}\label{lemma3} Let $\k \in \n^l$ be a strictly increasing sequence. There exists $c=c(\k) \in \r$ such that for $|z|<1$, $x\in \r$ and $y \in (-\infty,c]$ we have
\begin{equation}\label{eqn_E_recursion}
\E_{\k}(z,x,y)=-  \frac{\psi_{\tilde \k}(y)}{\psi_{\tilde \k}(x)} 
{\mathcal L}_{\k,x} \int_{-\infty}^y 
\E_{\tilde \k}(z,x,u) \frac{\psi_{\k}(u)}{\psi_{\tilde \k}(u)^2} \d u.
\end{equation}
\end{lemma}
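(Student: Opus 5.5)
Using \eqref{eqn_psi_kn_L_k}, we have $\psi_{\k,n}(y)=\mathcal L_{\k,y}\psi_{\tilde\k,n}(y)/\psi_{\tilde\k}(y)$. The idea is to write $\psi_{\k,n}(y)$ in integral form through Lemma \ref{lemma2} and then sum over $n$ term by term.

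\textbf{Step 1: the Schr\"odinger equations.} By the Darboux/Wronskian theory quoted around \eqref{f_kn_ODE}, the functions
\[
g_1(u):=\psi_{\tilde\k,n}(u)/\psi_{\tilde\k}(u), \qquad g_2(u):=\psi_{\k}(u)/\psi_{\tilde\k}(u)
\]
both solve $-g''+U_{\tilde\k}g=\lambda g$. Indeed, $e^{-x^2/2}H_{\tilde\k,m}/H_{\tilde\k}$ solves \eqref{f_kn_ODE} with $\k$ replaced by $\tilde\k$, and $\psi_{\k}=\psi_{\tilde\k,k_l}$. The eigenvalues are $\lambda_1=2n+3-2l$ and $\lambda_2=2k_l+3-2l$. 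These differ whenever $n\ne k_l$; for $n=k_l$ the term $\psi_{\k,n}$ vanishes anyway.

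Next, $\wr[g_2,g_1]=\wr[\psi_{\k},\psi_{\tilde\k,n}]/\psi_{\tilde\k}^2$. By \eqref{multiplication_identity} and \eqref{eqn_psi_kn_L_k}, this equals $\psi_{\k,n}/\psi_{\tilde\k}$.

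We choose $c$ to lie below all real zeros of $H_{\tilde\k}$. On $(-\infty,c]$ the quotient $\psi_{\k,n}/\psi_{\tilde\k}$ is then $e^{-u^2}\times$ rational, so it tends to $0$ as $u\to-\infty$. Lemma \ref{lemma2} then gives
\[
\int_{-\infty}^y \frac{\psi_{\tilde\k,n}(u)\psi_{\k}(u)}{\psi_{\tilde\k}(u)^2}\,\d u=\frac{1}{2(k_l-n)}\,\frac{\psi_{\k,n}(y)}{\psi_{\tilde\k}(y)}
\]
(the sign follows from $\lambda_2-\lambda_1$ with the ordering $[g_2,g_1]$).

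\textbf{Step 2: apply $\mathcal L_{\k,x}$ and rearrange.} Applying $\mathcal L_{\k,x}$ to $\psi_{\tilde\k,n}(x)$ and using \eqref{eqn_psi_kn_L_k} in $x$ gives $\psi_{\tilde\k}(x)\psi_{\k,n}(x)$. Hence
\[
-\frac{\psi_{\tilde\k}(y)}{\psi_{\tilde\k}(x)}\,\mathcal L_{\k,x}\left[\psi_{\tilde\k,n}(x)\int_{-\infty}^y\cdots\right]=\frac{\psi_{\k,n}(x)\psi_{\k,n}(y)}{2(n-k_l)}.
\]
Multiplying by the weight $z^n/(2^{n+l-1}n!\prod_{j<l}(n-k_j))$ from the series for $\E_{\tilde\k}$ reproduces exactly the coefficient in \eqref{def_mathcal_Ek}.

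The indices must be checked. The sum for $\E_{\tilde\k}$ runs over $n\notin\tilde\k$, and the extra term $n=k_l$ contributes zero because $\psi_{\k,k_l}\equiv0$. So summing over $n$ yields $\E_{\k}$, provided the sum commutes with $\int_{-\infty}^y$ and with $\partial_x$.

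\textbf{Step 3: justify the interchanges (main obstacle).} Differentiation in $x$ is harmless, since the series converges locally uniformly and analytically by Lemma \ref{lemma1}(ii).

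The integral over $(-\infty,y]$ is the delicate part, because Lemma \ref{lemma1}'s bound is only local. I would use \eqref{psi_upper_bound} for real $u$ together with the uniform real bound $|\psi_m(u)|\le C\,2^{m/2}\sqrt{m!}$ (Cram\'er's inequality). This gives the termwise estimate
\[
\left|z^n\frac{\psi_{\tilde\k,n}(x)\psi_{\tilde\k,n}(u)}{2^n n!\cdots}\right|\le C|z|^n n^{2l}\,|\psi_{\tilde\k}(u)|\,(1+|u|^N).
\]
Multiplying by $|\psi_{\k}(u)|/\psi_{\tilde\k}(u)^2$ leaves $e^{-u^2/2}$ times a rational function with no poles on $(-\infty,c]$, which is integrable. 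Dominated convergence then applies for $|z|<1$, and the identity \eqref{eqn_E_recursion} follows.
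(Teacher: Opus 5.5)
Your proposal is correct and follows the paper's proof essentially step for step. Both arguments apply Lemma~\ref{lemma2} termwise to $f_{\tilde\k,k_l}f_{\tilde\k,n}$, use the identity \eqref{eqn_psi_kn_L_k} first in $y$ and then in $x$, and dispose of the $n=k_l$ term because its $x$-factor satisfies $\mathcal L_{\k,x}\psi_{\tilde\k,k_l}(x)=\psi_{\tilde\k}(x)\psi_{\k,k_l}(x)=0$. Note that the corresponding $y$-integral $\int_{-\infty}^y f_{\tilde\k,k_l}(u)^2\,\d u$ is not zero, so this is the correct reading of your remark in Step~1.

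The only real deviation is in justifying the interchange of sum and integral. You dominate the integrand with Cram\'er's uniform bound $|\psi_m(u)|\leqslant C\,2^{m/2}\sqrt{m!}$ on $\r$, while the paper uses Cauchy--Schwarz together with $\lVert\psi_n\rVert^2=\sqrt{\pi}\,2^n n!$. Both arguments work.
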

\begin{proof}
We take $c \in \r$ such that the polynomial $H_{\tilde \k}(x)$ has no zeros on $(-\infty,c]$. This implies that $\psi_{\tilde \k}(u)=\exp(-(l-1)u^2/2) H_{\tilde \k}(u)$ is also non-zero on $(-\infty,c]$. We recall that $f_{\k,n}(x)$ is defined in \eqref{def_fkn} and that these functions satisfy  equation \eqref{f_kn_ODE}.  

To prove formula \eqref{eqn_E_recursion}, we first apply Lemma \ref{lemma2}:
\begin{align}\label{lemma3_proof1}
&\int_{-\infty}^y \E_{\tilde \k}(z,x,u) \frac{\psi_{\k}(u)}{\psi_{\tilde \k}(u)^2} \d u=
\sum\limits_{n \in \z_{\geqslant 0} \setminus \tilde \k} z^n
\frac{\psi_{\tilde \k,n}(x)}{2^{n+l-1} n! \prod\limits_{j=1}^{l-1} (n-k_j)} \int_{-\infty}^y f_{\tilde \k,k_l}(u) f_{\tilde \k,n}(u) \d u\\ \nonumber
&=\sum\limits_{n \in \z_{\geqslant 0} \setminus \k} z^n
\frac{ \psi_{\tilde \k,n}(x)}{2^{n+l-1} n! \prod\limits_{j=1}^{l-1} (n-k_j)} 
\frac{\wr[f_{\tilde\k,k_l}(y),f_{\tilde \k,n}(y)]}{2k_l-2n}+ 
\frac{z^{k_l} \psi_{\tilde \k,k_l}(x)}{2^{k_l+l-1} k_l! \prod\limits_{j=1}^{l-1} (k_l-k_j)}  \int_{-\infty}^y f_{\tilde \k,k_l}(u)^2  \d u.
\end{align}
From \eqref{eqn_psi_kn_L_k} we find
\[
\wr[f_{\tilde\k,k_l}(y),f_{\tilde \k,n}(y)]=\frac{1}{\psi_{\tilde \k}(y)^2} \wr[\psi_{\k}(y),\psi_{\tilde \k,n}(y)]=
\frac{\psi_{\k,n}(y)}{\psi_{\tilde \k}(y)},
\]
and therefore
\[
\int_{-\infty}^y \E_{\tilde \k}(z,x,u) \frac{\psi_{\k}(u)}{\psi_{\tilde \k}(u)^2} \d u=
-\frac{1}{\psi_{\tilde \k}(y)}\sum\limits_{n \in \z_{\geqslant 0} \setminus \k} z^n
\frac{\psi_{\tilde \k,n}(x) \psi_{\k,n}(y)}{2^{n+l} n! \prod\limits_{j=1}^{l} (n-k_j)}+z^{k_l} \psi_{\k}(x) G(y),
\]
for some function $G(y)$. 
Finally, applying identity \eqref{eqn_psi_kn_L_k} once again and noting that ${\mathcal L}_{\k,x} \psi_{\k}(x)\equiv 0$, we obtain
\begin{align}\label{lemma3_proof2}
-  \frac{\psi_{\tilde \k}(y)}{\psi_{\tilde \k}(x)} 
{\mathcal L}_{\k,x} \int_{-\infty}^y 
\E_{\tilde \k}(z,x,u) \frac{\psi_{\k}(u)}{\psi_{\tilde \k}(u)^2} \d u&=
\frac{1}{\psi_{\tilde \k}(x)}\sum\limits_{n \in \z_{\geqslant 0} \setminus \k} z^n
\frac{{\mathcal L}_{\k,x} \psi_{\tilde \k,n}(x) \psi_{\k,n}(y)}{2^{n+l} n! \prod\limits_{j=1}^{l} (n-k_j)}
\\ \nonumber &=
\sum\limits_{n \in \z_{\geqslant 0} \setminus \k} z^n
\frac{\psi_{\k,n}(x) \psi_{\k,n}(y)}{2^{n+l} n! \prod\limits_{j=1}^{l} (n-k_j)}=\E_{\k}(z,x,y).
\end{align}

In the above derivation of formula \eqref{eqn_E_recursion},  we interchanged summation and differentiation/integration. Let us first justify the interchange of summation and integration in the first step of \eqref{lemma3_proof1}. Consider the function $f_{\tilde \k,k_l}(u) f_{\tilde \k,n}(u)$, which we rewrite in the form 
\[
f_{\tilde \k,k_l}(u) f_{\tilde \k,n}(u)=\Big[ e^{-u^2/2} \frac{H_{\k}(u)}{H_{\tilde \k}(u)^2} \Big] \times \Big[ e^{(l-1)u^2/2} \psi_{\tilde \k,n}(u)\Big].
\]
The above factorization follows from \eqref{def_fkn} and \eqref{psi_k_H_k}.
The function $e^{-u^2/4} H_{\k}(u)/H_{\tilde \k}(u)^2$ is bounded by some constant $A=A(\k)$ on $(-\infty, c]$ (since $H_{\k}(u)$ and $H_{\tilde \k}(u)$ are polynomials and $H_{\tilde \k}(u)$ is non-zero on $(-\infty, c]$). Thus,
$g(u):=e^{-u^2/2} H_{\k}(u)/H_{\tilde \k}(u)^2$ satisfies $|g(u)|<A e^{-u^2/4}$ for $u \in (-\infty, c]$.  
From \eqref{psi_upper_bound} we conclude that there exists a constant $B=B(\tilde \k)$ such that 
\[
|e^{(l-1)u^2/2} \psi_{\tilde \k,n}(u)|< B n^{l-1} (1+|u|^{M}) \sum\limits_{j=0}^{l-1} 
|\psi_{n-j}(u)|, \;\;\; u \in \r,
\]
where $M=|\tilde \k|$. 
Applying the Cauchy-Schwarz inequality and using the fact that  $\lVert \psi_n \rVert^2=\sqrt{\pi}2^n n!$, we obtain 
\begin{align*}
 \int_{-\infty}^y  | f_{\tilde \k,k_l}(u) f_{\tilde \k,n}(u) | \d u 
&\leqslant 
 A B n^{l-1} \sum\limits_{j=0}^{l-1}  \int_{-\infty}^c (1+|u|^{M}) e^{-u^2/4} \;
|\psi_{n-j}(u)|\, \d u
\\ &\leqslant A B  n^{l-1} \Big[\int_{-\infty}^c (1+|u|^{M})^2 e^{-u^2/2}\d u\Big]^{1/2} 
\times  \sum\limits_{j=0}^{l-1} \lVert \psi_{n-j}\rVert
\leqslant C l n^{l-1} \sqrt{2^n n!},
\end{align*}
for some $C=C(\k)>0$. 
Using the above estimate and the same argument as in the proof of Lemma \ref{lemma1}, we conclude that the series 
\[
\sum\limits_{n \in \z_{\geqslant 0} \setminus  \k} r^n
\frac{ |\psi_{\tilde \k,n}(x)|}{2^{n+l-1} n! \prod\limits_{j=1}^{l-1} (n-k_j)}  \int_{-\infty}^y | f_{\tilde \k,k_l}(u) f_{\tilde \k,n}(u) | \d u 
 \]
 converges for all $r\in (0,1)$, $x\in \c$ and $y \in (-\infty, c]$. Thus, we can apply Fubini's Theorem and interchange summation and integration in \eqref{lemma3_proof1}.
 
To justify the interchange of summation and differentiation in the first step of \eqref{lemma3_proof2}, we use the same argument as in the proof of Lemma \ref{lemma1} and conclude that the series  
\[
\sum\limits_{n \in \z_{\geqslant 0} \setminus \k} z^n
\frac{\psi_{\tilde \k,n}(x) \psi_{\k,n}(y)}{2^{n+l} n! \prod\limits_{j=1}^{l} (n-k_j)}
\]
converges uniformly on compact subsets of $\{(z,x,y) \in \c^3 \; : \; |z|<1\}$. Hence,  it defines an analytic function on this set, and the infinite series can be differentiated term-by-term. 
\end{proof}

For $n\in \n$, $|z|<1$, $x\in \c$ and $y\in \r$ we denote 
\begin{equation}\label{def_In}
I_n(z,x,y):=\int_{-\infty}^y \E(z,x,u) \psi_n(u) \d u.
\end{equation}

\begin{lemma}\label{lemma4}
The function $I_n(z,x,y)$ extends to an analytic function in the domain \mbox{$\{(z,x,y) \in \c^3 \; : \; |z|<1\}$} and it satisfies 
\begin{equation}\label{eqn_I_n_main}
I_n(z,x,y)=
 z^n \psi_n(x)  \frac{\sqrt{\pi}}{2} \big( 1+ 
{\textnormal{erf}}(w) \big)-e^{-x^2/2-w^2} \sum\limits_{i=1}^n \binom{n}{i} \zeta^{i} z^{n-i}
H_{i-1}(w) H_{n-i}(x),
\end{equation}
where $\zeta:=\sqrt{1-z^2}$ and $w:=(y-xz)/\zeta$. 
\end{lemma}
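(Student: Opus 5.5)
The plan is to complete the square in the Gaussian integrand, show that the resulting integral $\int_{-\infty}^y \E(z,x,u)\psi_n(u)\,\d u$ satisfies a simple first-order recursion in $n$, and then verify the closed form \eqref{eqn_I_n_main} by differentiating in $y$ and comparing limits at $-\infty$.

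For real $z\in(-1,1)$ and real $x,y$ I would first rewrite the integrand. Combining the exponent of $\E(z,x,u)$ from \eqref{def_mathcal_E} with $-u^2/2$ gives
\[
\frac{4xuz-(1+z^2)(x^2+u^2)}{2(1-z^2)}-\frac{u^2}{2}
=-\frac{x^2}{2}-\Big(\frac{u-xz}{\zeta}\Big)^2 .
\]
Indeed, the $u^2$ coefficient is $-\frac{(1+z^2)+(1-z^2)}{2\zeta^2}=-1/\zeta^2$ and the $xu$ coefficient is $2z/\zeta^2$. For the $x^2$ term one needs $-\frac{1+z^2}{2\zeta^2}=-\frac12-\frac{z^2}{\zeta^2}$, which holds.

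Hence
\[
\E(z,x,u)\psi_n(u)=\zeta^{-1}e^{-x^2/2-v^2}H_n(u),\qquad v=\frac{u-xz}{\zeta}.
\]
Substituting $u=xz+\zeta v$ turns the integral into
\[
I_n=e^{-x^2/2}\int_{-\infty}^{w}e^{-v^2}H_n(xz+\zeta v)\,\d v .
\]

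Next I would expand $H_n(xz+\zeta v)$ in powers of $\zeta$. One route is the multiplication and addition formula $H_n(a+b)=\sum_i\binom ni H_{n-i}(a)(2b)^i$. A cleaner route is the generating-function identity $\sum_i \binom ni \zeta^i z^{n-i}H_i(v)H_{n-i}(x)=H_n(xz+\zeta v)$, valid because $z^2+\zeta^2=1$; this follows from $e^{2st-t^2}$ by splitting $t=zt+\zeta t$.

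With this expansion,
\[
I_n=e^{-x^2/2}\sum_{i=0}^n\binom ni\zeta^iz^{n-i}H_{n-i}(x)\int_{-\infty}^w e^{-v^2}H_i(v)\,\d v .
\]
The Rodrigues relation $e^{-v^2}H_i(v)=-\frac{\d}{\d v}\big(e^{-v^2}H_{i-1}(v)\big)$ gives $\int_{-\infty}^w e^{-v^2}H_i(v)\,\d v=-e^{-w^2}H_{i-1}(w)$ for $i\ge1$. The $i=0$ term gives $\frac{\sqrt\pi}{2}(1+\textnormal{erf}(w))$, with prefactor $z^n e^{-x^2/2}H_n(x)=z^n\psi_n(x)$. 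This produces exactly \eqref{eqn_I_n_main}.

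The remaining issue is analytic continuation. The right-hand side of \eqref{eqn_I_n_main} is manifestly entire in $x,y$ and analytic in $z$ for $|z|<1$, with $\zeta$ taken on its principal branch, which is analytic there since $1-z^2\notin(-\infty,0]$. The singular factor $1/\zeta$ does not appear, since only nonnegative powers $\zeta^{i}$ occur. The one point needing care is that $w$ involves $1/\zeta$, but $\zeta\neq0$ for $|z|<1$, so this is harmless.

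So the right-hand side defines the claimed extension. It agrees with the integral for real $z\in(-1,1)$, $x\in\r$, $y\in\r$. It also agrees for complex $x$ with $|z|<1$: there the integral definition itself is analytic in $(z,x)$, since the Gaussian factor $e^{-u^2/\zeta^2}$ with $\mathrm{Re}(1/\zeta^2)>0$ gives locally uniform convergence. By the identity theorem the two coincide on the stated domain.

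I expect this last point to be the main obstacle: checking $\mathrm{Re}(1/\zeta^2)>0$ when $|z|<1$. This holds because $1-z^2$ lies in the right half-plane, so $\mathrm{Re}\big(1/(1-z^2)\big)>0$.
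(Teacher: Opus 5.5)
Your argument is correct and is essentially the paper's proof: you complete the square to get $I_n=e^{-x^2/2}\int_{-\infty}^{w}e^{-v^2}H_n(xz+\zeta v)\,\d v$, expand with the Hermite addition formula (valid since $z^2+\zeta^2=1$), integrate termwise via $\int_{-\infty}^w e^{-v^2}H_i(v)\,\d v=-e^{-w^2}H_{i-1}(w)$, and continue analytically from real $z\in(-1,1)$, $x,y\in\mathbb{R}$. Your extra check that the defining integral is itself analytic in $(z,x)$, because $\textnormal{Re}(1/\zeta^2)>0$ for $|z|<1$, fills in a step the paper leaves implicit; two small corrections are that the generating-function derivation splits $t^2=(zt)^2+(\zeta t)^2$ (not $t=zt+\zeta t$), and that the recursion-in-$n$ strategy announced in your first paragraph is never actually used.
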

\begin{proof}
We first observe that $\E(z,x,y)$ defined in \eqref{def_mathcal_E} can be written in the form 
\begin{equation*}
\E(z,x,y)=\zeta^{-1} e^{y^2/2-x^2/2-(y-xz)^2/\zeta^2}.
\end{equation*}
Assume that $z\in (-1,1)$ and $x,y\in \r$. 
Using the above expression for $\E(z,x,y)$, we rewrite \eqref{def_In} in the equivalent form 
\[
I_n(z,x,y)=\zeta^{-1} e^{-x^2/2} \int_{-\infty}^y e^{-(u-xz)^2/\zeta^2} H_n(u) \d u. 
\]
We change the variable of integration to $v=(u-xz)/\zeta$ and obtain
\begin{equation*}
I_n(z,x,y)=e^{-x^2/2} \int_{-\infty}^w e^{-v^2} H_n(\zeta v + x z) \d v. 
\end{equation*}
Noting that $\zeta^2+z^2=1$, we apply the addition formula for Hermite polynomials (see \cite[formula 18.18.11]{NIST})
\begin{equation*}
H_n(\zeta v+xz)=\sum\limits_{i=0}^n \binom{n}{i} \zeta^i z^{n-i} 
H_i(v) H_{n-i}(x).
\end{equation*}
The desired result \eqref{eqn_I_n_main} follows from the above two equations and the following formulas:
\begin{equation}\label{two_integrals}
\int_{-\infty}^w e^{-v^2} \d v= \frac{\sqrt{\pi}}{2} \big( 1+ 
{\textnormal{erf}}(w) \big), \;\;\; \int_{-\infty}^w e^{-v^2} H_i(v) \d v=-e^{-w^2} H_{i-1}(w),
\end{equation}
which can be found in \cite[\S 7.2]{NIST}  and \cite[eqn. (7.373)]{Jeffrey2007}. 

Now that we have established \eqref{eqn_I_n_main} for $z \in (-1,1)$, $x,y\in \r$, we note that the right-hand side in \eqref{eqn_I_n_main} is an analytic function in $\{(z,x,y) \in \c^3 \; : \; |z|<1\}$. Thus, $I_n(z,x,y)$ can be extended in this domain by analytic continuation. 
\end{proof}

We introduce the $l$-th order linear differential operator 
${\mathcal M}_{\k,x}$, acting on the $x$-variable, by
\begin{equation}\label{def_M_kx}
{\mathcal M}_{\k,x} h(x):=\wr[ \psi_{k_1},\dots,\psi_{k_l},h](x), 
\end{equation}
where $h$ is an entire function. The following result was the starting point for the derivation of the Mehler formula for exceptional Hermite polynomials in  \cite{Pupasov_2015}; see also \cite{Pupasov_2007}, where this result was first obtained for Krein-Adler sequences $\k$.

\begin{lemma}\label{lemma5}
For $|z|<1$, $x,y\in \c$  we have
\begin{equation}\label{eqn_PM}
\E_{\k}(z,x,y)=  
{\mathcal M}_{\k,x} \sum\limits_{j=1}^l (-1)^{j+l-1} \psi_{\k \setminus k_j}(y)  I_{k_j}(z,x,y).
\end{equation}
\end{lemma}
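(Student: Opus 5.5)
The plan is to prove \eqref{eqn_PM} directly at the level of series, rather than by the integration-by-parts route of Lemma~\ref{lemma3}. First I take $z\in(-1,1)$ and $x,y\in\r$, and extend to complex arguments by analytic continuation at the end. Both sides are analytic in $\{|z|<1\}$: the left by Lemma~\ref{lemma1}(ii), the right by Lemma~\ref{lemma4} together with the fact that $\mathcal M_{\k,x}$ is a differential operator with entire coefficients.

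\textbf{Step 1: expand $I_{k_j}$.} Insert the series \eqref{def_mathcal_E} for $\E(z,x,u)$ into \eqref{def_In} and integrate term by term. This is justified by Cauchy--Schwarz and $\lVert\psi_n\rVert^2=\sqrt\pi 2^n n!$, exactly as in the proof of Lemma~\ref{lemma3}. Each $\psi_n$ solves $-g''+x^2g=(2n+1)g$, and $\wr[\psi_n,\psi_{k_j}]\to0$ at $-\infty$. So Lemma~\ref{lemma2} gives, for $n\neq k_j$,
\[
\int_{-\infty}^y\psi_n\psi_{k_j}\,\d u=\frac{\wr[\psi_n,\psi_{k_j}](y)}{2(n-k_j)} .
\]
Hence
\[
I_{k_j}(z,x,y)=\sum_{n\neq k_j} z^n\frac{\psi_n(x)\,\wr[\psi_n,\psi_{k_j}](y)}{2^{n+1}n!\,(n-k_j)}+z^{k_j}\psi_{k_j}(x)\,c_j(y).
\]

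\textbf{Step 2: apply $\mathcal M_{\k,x}$.} The operator acts only on $x$, and I apply it term by term, which is allowed by locally uniform convergence as in Lemma~\ref{lemma3}. Since $\mathcal M_{\k,x}\psi_{k_i}\equiv0$ for every $i$, all terms with $n\in\k$ are killed, including the diagonal terms $c_j$. For the other terms, $\mathcal M_{\k,x}\psi_n(x)=\psi_{\k,n}(x)$. Comparing with \eqref{def_mathcal_Ek}, the lemma therefore reduces to the following $n$-wise identity in the single variable $y$, for $n\notin\k$:
\begin{equation*}
\sum_{j=1}^l(-1)^{j+l-1}\psi_{\k\setminus k_j}(y)\frac{\wr[\psi_n,\psi_{k_j}](y)}{n-k_j}=\frac{2^{1-l}\,\psi_{\k,n}(y)}{\prod_{j=1}^l(n-k_j)} .
\end{equation*}

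\textbf{Step 3: prove this identity; I expect this to be the main obstacle.} I would prove it by induction on $l$. The case $l=1$ is trivial. For the inductive step I would try the derivative trick from the proof of Lemma~\ref{lemma2}. Differentiate both sides in $y$. On the left, use $\frac{\d}{\d y}\wr[\psi_n,\psi_{k_j}]=2(k_j-n)\psi_n\psi_{k_j}$, which removes the denominators. On the right, use the Wronskian identity \eqref{Wronskian_identity}, in the form \eqref{eqn_psi_kn_L_k}, to express the derivative of $\psi_{\k,n}/\psi_{\k}$-type quotients through lower Wronskians.

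An alternative is to regard both sides as rational functions of the spectral parameter. Write $\lambda=2n+1$ and replace $\psi_n$ by a general solution $g$ of $g''=(y^2-\lambda)g$. Then both sides have the form $a(\lambda)g+b(\lambda)g'$, where $a$ and $b$ are rational in $\lambda$ with at most simple poles at $\lambda_j=2k_j+1$. On the right, $\wr[\psi_{k_1},\dots,\psi_{k_l},g]$ is, after reducing second derivatives, polynomial in $\lambda$ of degree at most $l-1$ in each coefficient. So it suffices to match residues at each $\lambda_j$ and the behaviour at $\lambda=\infty$. The residue at $\lambda_j$ on the right is $\wr[\psi_{\k},g]$ evaluated at $\lambda=\lambda_j$, divided by $\prod_{i\neq j}$. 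Applying \eqref{Wronskian_identity} with the family $\k\setminus k_j$ expresses this as $\psi_{\k\setminus k_j}\wr[g,\psi_{k_j}]$ up to the stated sign, after using that $g$ and $\psi_{k_j}$ then share the same eigenvalue. I would try the residue argument first, keeping the induction via \eqref{eqn_psi_kn_L_k} as a fallback.

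\textbf{Step 4: conclude.} Once the identity holds, the series obtained in Step 2 coincides termwise with $\E_{\k}(z,x,y)$ in \eqref{def_mathcal_Ek}. This proves \eqref{eqn_PM} for real arguments, and analytic continuation extends it to all $|z|<1$, $x,y\in\c$.
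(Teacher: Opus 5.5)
Your route differs from the paper's. Steps 1, 2 and the reduction to the $n$-wise identity are sound, and the constants are right. The paper instead inducts on $l$ at the level of generating functions. Lemma~\ref{lemma3} writes $\E_{\k}$ through $\E_{\tilde\k}$ via ${\mathcal L}_{\k,x}$ and an integral over $(-\infty,y]$, for $y\le c$ so as to avoid zeros of $H_{\tilde\k}$. It then integrates by parts and uses the cofactor identity $\sum_j(-1)^{j-1}\psi_{k_j}\psi_{\k\setminus k_j}=0$, the relation ${\mathcal L}_{\k,x}{\mathcal M}_{\tilde\k,x}=\psi_{\tilde\k}{\mathcal M}_{\k,x}$, and analytic continuation. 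There the factors $n-k_j$ appear one at a time, from Lemma~\ref{lemma2} applied to the Darboux-transformed $f_{\tilde\k,n}$. Your Hermite-basis expansion keeps the analysis elementary and puts all the content into one finite identity. Your residue argument reads that identity as the partial-fraction decomposition of $2\,\wr[\psi_{k_1},\dots,\psi_{k_l},g]/\prod_i(\lambda-\lambda_i)$ in the spectral parameter. This explains the coefficients $(-1)^{j+l-1}\psi_{\k\setminus k_j}(y)$ in \eqref{eqn_PM} as residues and bypasses Lemma~\ref{lemma3}. The price is that all $l-1$ eigenvalue-difference factors must be produced at once.

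That is exactly what Step~3 does not yet do. At $\lambda=\lambda_j$ you need, for \emph{every} solution $g$ of $g''=(y^2-\lambda_j)g$ (not just $g=\psi_{k_j}$),
\[
\wr[\psi_{k_1},\dots,\psi_{k_l},g]=(-1)^{l-j}\prod_{i\neq j}(\lambda_j-\lambda_i)\,\psi_{\k\setminus k_j}\,\wr[\psi_{k_j},g].
\]
The product is precisely what cancels your denominator. Write $W_j(u):=\wr[\psi_{k_1},\dots,\widehat{\psi_{k_j}},\dots,\psi_{k_l},u]$. Identity \eqref{Wronskian_identity} alone only gives
\[
\psi_{\k\setminus k_j}\,\wr[\psi_{k_1},\dots,\widehat{\psi_{k_j}},\dots,\psi_{k_l},\psi_{k_j},g]=\wr[W_j(\psi_{k_j}),W_j(g)].
\]
The fact that $g$ and $\psi_{k_j}$ share an eigenvalue then only tells you this equals $\psi_{\k\setminus k_j}^2$ times an \emph{unknown} constant.

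The missing ingredient is the intertwining relation for one Darboux step. Suppose $\phi''=(V-\mu)\phi$, $u''=(V-\lambda)u$, $v''=(V-\lambda)v$, and set $Au:=u'-(\phi'/\phi)u$. Then $(Au)'=(\mu-\lambda)u-(\phi'/\phi)Au$, hence $\wr[Au,Av]=(\lambda-\mu)\wr[u,v]$, and $Au$ solves the equation with potential $V-2(\ln\phi)''$ and the same $\lambda$. Iterate this along Crum's factorization of $u\mapsto W_j(u)/\psi_{\k\setminus k_j}$, which follows from \eqref{Wronskian_identity}; work away from zeros, then extend by continuity. Note that \eqref{f_kn_ODE} does not suffice here, since $g$ is not a Hermite function. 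The iteration gives
\[
\wr[W_j(u),W_j(v)]=\psi_{\k\setminus k_j}^2\prod_{i\neq j}(\lambda-\lambda_i)\,\wr[u,v],
\]
and hence the first display, with the sign coming from moving $\psi_{k_j}$ back to position $j$. With this the residues match and your proof is complete.

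Two minor points:
\begin{itemize}
\item With your own Step~1 convention, $\frac{\d}{\d y}\wr[\psi_n,\psi_{k_j}]=2(n-k_j)\psi_n\psi_{k_j}$, not $2(k_j-n)\psi_n\psi_{k_j}$.
\item The fallback derivative trick does not clear the denominators, because the terms containing $\psi_{\k\setminus k_j}'$ keep them. The residue route is the one to pursue.
\end{itemize}
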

\begin{proof}
The proof proceeds by induction on $l$. The case $l=1$ and $\k=(k)$ is covered by Lemma \ref{lemma3}, followed by analytic continuation in $x$ and $y$. 
 Assume now  that $l\geqslant 2$ and that for $\tilde \k=(k_1,k_2,\dots,k_{l-1})$ we have 
\[
\E_{\tilde \k}(z,x,y)= 
{\mathcal M}_{\tilde \k,x} \sum\limits_{j=1}^{l-1} (-1)^{j+l-2} \psi_{\tilde \k \setminus k_j}(y)  I_{k_j}(z,x,y),
\]
with the above identity valid for all $z \in (-1,1)$ and $x,y\in \r$.
We substitute the above expression for $\E_{\tilde \k}(z,x,y)$ into the right-hand side of \eqref{eqn_E_recursion} and conclude that there exists $c \in \r$ such that for $z\in (-1,1)$, $x\in \r$ and $y \in (-\infty,c]$ we have  
\begin{equation}\label{lemma5_proof1}
\E_{\k}(z,x,y)=
  \frac{\psi_{\tilde \k}(y)}{\psi_{\tilde \k}(x)} 
{\mathcal L}_{\k,x}{\mathcal M}_{\tilde \k,x} \sum\limits_{j=1}^{l-1} (-1)^{j+l-1} \int_{-\infty}^y 
 \psi_{\tilde \k \setminus k_j}(u)  I_{k_j}(z,x,u) \frac{\psi_{\k}(u)}{\psi_{\tilde \k}(u)^2} \d u.
\end{equation}
Using the Wronskian identity \eqref{Wronskian_identity}, we check that 
\[
\frac{\d}{\d u} \frac{\psi_{\k \setminus k_j}(u)}{\psi_{\tilde \k}(u)^2}=\psi_{\tilde \k \setminus k_j}(u) \frac{\psi_{\k}(u)}{\psi_{\tilde \k}(u)^2}.
\]
Note that $\psi_{\k \setminus k_j}(u)/\psi_{\tilde \k}(u)$ is a rational function of $u$ (this follows from \eqref{multiplication_identity}). Due to  our assumption $z\in (-1,1)$, we have $\zeta>0$, thus from \eqref{eqn_I_n_main} we see that the function $u\mapsto I_{k_j}(z,x,u)$ decays exponentially fast as $u\to -\infty$ (this fact also follows from the integral definition \eqref{def_In}). Thus, 
\[
\lim\limits_{u\to -\infty} I_{k_j}(z,x,u)\frac{\psi_{\k \setminus k_j}(u)}{\psi_{\tilde \k}(u)}=0.
\]
Using the above two results and integrating by parts, we obtain 
\begin{equation}\label{lemma5_proof2}
\int_{-\infty}^y 
 \psi_{\tilde \k \setminus k_j}(u)  I_{k_j}(z,x,u) \frac{\psi_{\k}(u)}{\psi_{\tilde \k}(u)^2} \d u=
 I_{k_j}(z,x,y) \frac{\psi_{\k \setminus k_j}(y)}{\psi_{\tilde \k}(y)}-\int_{-\infty}^y \E(z,x,u) \psi_{k_j}(u) 
 \frac{\psi_{\k \setminus k_j}(u)}{\psi_{\tilde \k}(u)} \d u.
\end{equation}

Next,  we use the identity
\[
0=\det \begin{bmatrix}
\psi_{k_1}(u) & \psi_{k_2}(u) & \cdots & \psi_{k_{l}}(u)\\
\psi_{k_1}(u) & \psi_{k_2}(u) & \cdots & \psi_{k_{l}}(u)\\
\psi_{k_1}'(u) & \psi_{k_2}'(u) & \cdots & \psi_{k_{l}}'(u)\\
\vdots & \vdots & \ddots & \vdots\\
\psi_{k_1}^{(l-2)}(u) & \psi_{k_2}^{(l-2)}(u) & \cdots & \psi_{k_{l}}^{(l-2)}(u)
\end{bmatrix}=\sum\limits_{j=1}^l (-1)^{j-1} \psi_{k_j}(u)\times  \psi_{\k \setminus k_j}(u), 
\]
where we have expanded the above determinant along the first row. 
Therefore, 
\[
\sum\limits_{j=1}^{l-1} (-1)^{j-1} \psi_{k_j}(u)  \psi_{\k \setminus k_j}(u) = (-1)^{l} \psi_{k_l}(u)  \psi_{\tilde \k}(u).
\]
Combining the above result with \eqref{lemma5_proof1} and \eqref{lemma5_proof2}
we arrive at 
\[
\E_{\k}(z,x,y)=
  \frac{1}{\psi_{\tilde \k}(x)} 
{\mathcal L}_{\k,x}{\mathcal M}_{\tilde \k,x} \sum\limits_{j=1}^{l} (-1)^{j+l-1} 
 \psi_{\k \setminus k_j}(y)  I_{k_j}(z,x,y). 
\]
From the Wronskian identity \eqref{Wronskian_identity} it follows that
\[
{\mathcal L}_{\k,x}{\mathcal M}_{\tilde \k,x} h(x) = \psi_{\tilde \k}(x) {\mathcal M}_{\k,x}h(x), 
\]
for any entire function $h$. Thus, we have proved that for $z\in (-1,1)$, $x\in \r$ and $y\in (-\infty, c]$ we have
\[
\E_{\k}(z,x,y)=
 {\mathcal M}_{\k,x} \sum\limits_{j=1}^{l} (-1)^{j+l-1} 
 \psi_{\k \setminus k_j}(y)  I_{k_j}(z,x,y).
\]
By analytic continuation, the above identity holds everywhere in the domain $\{(z,x,y)\in \c^3 \; : \; |z|<1\}$. 
This completes the induction step. 
\end{proof}

\begin{corollary}\label{Corollary1}
For $l\geqslant 2$,  $|z|<1$, $x\in \c$ and $y\in \r$  we have
\begin{equation}\label{eqn_PM2}
\E_{\k}(z,x,y)=  
{\mathcal M}_{\k,x}  \int_{-\infty}^y  \E(z,x,u) e^{-u^2/2} P_{\k}(u,y) \d u,
\end{equation}
where $P_{\k}(u,y)$ is a polynomial in $u$ that satisfies $\partial_u^j P_{\k}(u,y) |_{u=y}=0$ for all $j=0,1,\dots,l-2$. 
\end{corollary}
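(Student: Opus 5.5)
Starting from Lemma \ref{lemma5}, I will take $y\in\r$ and insert the integral definition \eqref{def_In} of $I_{k_j}(z,x,y)$ into \eqref{eqn_PM}. Since $\psi_{k_j}(u)=e^{-u^2/2}H_{k_j}(u)$, we have
\[
\sum_{j=1}^l (-1)^{j+l-1}\psi_{\k\setminus k_j}(y) I_{k_j}(z,x,y)=\int_{-\infty}^y \E(z,x,u)\, e^{-u^2/2} \sum_{j=1}^l (-1)^{j+l-1}\psi_{\k\setminus k_j}(y) H_{k_j}(u)\, \d u .
\]
The obvious definition is
\[
P_{\k}(u,y):=\sum_{j=1}^l (-1)^{j+l-1}\psi_{\k\setminus k_j}(y) H_{k_j}(u).
\]
For fixed $y$ this is a polynomial in $u$; its coefficients carry the factor $e^{-(l-1)y^2/2}$, which is harmless because $y$ is a parameter. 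Everything on the right-hand side of \eqref{eqn_PM} sits under the operator ${\mathcal M}_{\k,x}$, and $x$ enters only through $\E(z,x,u)$. So \eqref{eqn_PM2} follows at once, with ${\mathcal M}_{\k,x}$ applied to the integral.

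For the vanishing of derivatives, I will write $P_{\k}(u,y)=e^{u^2/2}R(u,y)$ with
\[
R(u,y):=\sum_{j}(-1)^{j+l-1}\psi_{\k\setminus k_j}(y)\psi_{k_j}(u).
\]
Up to sign, $R(u,y)$ is the determinant of the $l\times l$ matrix whose first row is $(\psi_{k_1}(u),\dots,\psi_{k_l}(u))$ and whose remaining rows are $(\psi^{(m)}_{k_1}(y),\dots,\psi^{(m)}_{k_l}(y))$ for $m=0,\dots,l-2$. This follows from cofactor expansion along the first row, exactly as in the zero-determinant identity used in the proof of Lemma \ref{lemma5}. Differentiating $i$ times in $u$ affects only the first row. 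At $u=y$ and for $0\le i\le l-2$, that row then coincides with one of the other rows, so $\partial_u^i R(u,y)|_{u=y}=0$ for $i=0,\dots,l-2$.

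Finally, by Leibniz's rule, $\partial_u^j P_{\k}=\sum_i\binom{j}{i}\partial_u^{j-i}(e^{u^2/2})\,\partial_u^i R$. So the derivatives of $P_{\k}$ up to order $l-2$ also vanish at $u=y$, since $R$ vanishes there to order $l-1$.

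The only point I expect to need care is the sign bookkeeping in the cofactor expansion, which fixes the overall sign of $R$. That sign is irrelevant to the vanishing claim. Convergence of the integral is already guaranteed by Lemma \ref{lemma4}, since each $I_{k_j}$ converges. The restriction $l\ge2$ is needed only so that the vanishing condition is nonempty.
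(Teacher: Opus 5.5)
Your proposal is correct and follows the paper's proof. You define $P_{\k}(u,y)$ by the same sum, substitute the integral definition of $I_{k_j}$ into Lemma 5, and get the vanishing of $\partial_u^j P_{\k}(u,y)|_{u=y}$ from a determinant with two equal rows. The only cosmetic difference is that the paper first uses the multiplication identity to write $P_{\k}$ itself as $e^{-(l-1)y^2/2}(-1)^l$ times a determinant with Hermite-polynomial entries, while you keep the $\psi$-determinant and pass the vanishing to $P_{\k}=e^{u^2/2}R$ via Leibniz's rule; both work equally well.
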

\begin{proof}
Formula \eqref{eqn_PM2} is equivalent to \eqref{eqn_PM} with $P_{\k}(u,y)$ defined by
\[
e^{-u^2/2} P_{\k}(u,y)=\sum\limits_{j=1}^l (-1)^{j+l-1}  \psi_{k_j}(u) \psi_{\k \setminus k_j}(y)=(-1)^l \det \begin{bmatrix}
\psi_{k_1}(u) & \psi_{k_2}(u) & \cdots & \psi_{k_{l}}(u)\\
\psi_{k_1}(y) & \psi_{k_2}(y) & \cdots & \psi_{k_{l}}(y)\\
\psi_{k_1}'(y) & \psi_{k_2}'(y) & \cdots & \psi_{k_{l}}'(y)\\
\vdots & \vdots & \ddots & \vdots\\
\psi_{k_1}^{(l-2)}(y) & \psi_{k_2}^{(l-2)}(y) & \cdots & \psi_{k_{l}}^{(l-2)}(y)
\end{bmatrix}.
\]
The above equation can be written in the form
\[
P_{\k}(u,y)=e^{-(l-1)y^2/2} (-1)^l \det\begin{bmatrix}
H_{k_1}(u) & H_{k_2}(u) & \cdots & H_{k_{l}}(u)\\
H_{k_1}(y) & H_{k_2}(y) & \cdots & H_{k_{l}}(y)\\
H_{k_1}'(y) & H_{k_2}'(y) & \cdots & H_{k_{l}}'(y)\\
\vdots & \vdots & \ddots & \vdots\\
H_{k_1}^{(l-2)}(y) & H_{k_2}^{(l-2)}(y) & \cdots & H_{k_{l}}^{(l-2)}(y)
\end{bmatrix},
\]
which immediately implies 
$\partial_u^j P_{\k}(u,y) |_{u=y}=0$ for all $j=0,1,\dots,l-2$. 
\end{proof}

\begin{lemma}\label{lemma6}
Let $m,n \in {\mathbb Z}_{\geqslant 0}$ and let $x, y$ be fixed real numbers satisfying $x>y$. Let $P(u)$ be a polynomial such that $P^{(i)}(y)=0$ for $i=0,1,\dots,n$. If $m\leqslant n+1$ then 
\begin{equation}\label{eqn_integral_limit}
\lim\limits_{z\to 1-} \frac{\partial_x^{m} \int_{-\infty}^y \E(z,x,u) e^{-u^2/2}  P(u) \d u}{\E(z,x,y)} 
=0. 
\end{equation}
If $m=n+2$, the above limit exists and is finite.  
\end{lemma}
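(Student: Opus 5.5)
The plan is to make everything explicit using the closed form of $\E$ from the proof of Lemma \ref{lemma4}, and then to reduce the limit to a Laplace-type endpoint analysis at $u=y$. We write $\zeta=\sqrt{1-z^2}$ and
\[
\E(z,x,u)=\zeta^{-1} e^{u^2/2-x^2/2-(u-xz)^2/\zeta^2},
\]
and restrict to $z\in(0,1)$ close to $1$. In this range $xz-y>\delta>0$ for some fixed $\delta$, because $x>y$.

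The first step is to differentiate under the integral sign. This is legitimate since the integrand and all its $x$-derivatives are Gaussian in $u$, locally uniformly in $x$. Set $\phi(x,u):=-x^2/2-(u-xz)^2/\zeta^2$. Then $\partial_x\phi=-x+2z(u-xz)/\zeta^2$ and $\partial_x^2\phi=-1-2z^2/\zeta^2$, while all higher $x$-derivatives of $\phi$ vanish. By Faà di Bruno (or a Hermite-polynomial type recursion) we get
\[
\partial_x^m \E(z,x,u)=\E(z,x,u)\,R_m(z,x,u),
\]
where $R_m$ is a finite sum of terms $c\,(\partial_x\phi)^{a}(\partial_x^2\phi)^{b}$ with $a+2b=m$. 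Since $\partial_x\phi=O(\zeta^{-2}(1+|u-xz|))$ and $\partial_x^2\phi=O(\zeta^{-2})$, each such term is $O\big(\zeta^{-2(a+b)}(1+|u-xz|)^a\big)$. The dominant term is $a=m$, $b=0$, for which $\zeta^{2m}R_m\to (2(y-x))^m$ when $u$ is near $y$ and $z\to1^-$.

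The second step is the substitution $u=y-\zeta^2 t$, $t\in(0,\infty)$. The identity
\[
(u-xz)^2=(y-xz)^2+2\zeta^2 t(xz-y)+\zeta^4t^2
\]
gives
\[
\frac{\int_{-\infty}^y \partial_x^m\E(z,x,u)e^{-u^2/2}P(u)\,\d u}{\E(z,x,y)}
= e^{-y^2/2}\,\zeta^2\int_0^\infty e^{-2t(xz-y)-\zeta^2t^2}\,R_m(z,x,y-\zeta^2t)\,P(y-\zeta^2 t)\,\d t .
\]
Because $P^{(i)}(y)=0$ for $i\le n$, Taylor's formula gives $P(y-\zeta^2t)=\zeta^{2(n+1)}t^{n+1}S(\zeta^2 t)$ with $S$ a polynomial. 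After multiplying and dividing by $\zeta^{2m}$, the whole expression equals $\zeta^{2(n+2-m)}$ times
\[
J(z):=e^{-y^2/2}\int_0^\infty e^{-2t(xz-y)-\zeta^2t^2}\,\big[\zeta^{2m}R_m(z,x,y-\zeta^2t)\big]\,t^{n+1}S(\zeta^2t)\,\d t .
\]

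The third step is to show that $J(z)$ is bounded and converges as $z\to1^-$; this is where the main care is needed. The bracket $\zeta^{2m}R_m$ and the factor $S(\zeta^2t)$ are bounded by polynomials in $t$, uniformly for $z$ near $1$, and the weight $e^{-2\delta t}$ integrates any polynomial. Dominated convergence then applies and gives
\[
J(z)\to e^{-y^2/2}\,(2(y-x))^m P^{(n+1)}(y)\,\frac{(-1)^{n+1}}{(n+1)!}\int_0^\infty e^{-2t(x-y)}t^{n+1}\,\d t,
\]
which is finite. Therefore the ratio is $O(\zeta^{2(n+2-m)})$. It tends to $0$ when $m\le n+1$, and for $m=n+2$ it converges to the finite limit above. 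The only delicate points are the bookkeeping of the powers of $\zeta$ in $R_m$, namely checking that every term is at most $O(\zeta^{-2m})$ times a polynomial in $t$, and the uniform domination needed to interchange limit and integral.
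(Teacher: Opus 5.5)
Your proof is correct. It differs from the paper's in how the $x$-derivatives and the vanishing of $P$ are handled. The analytic core is the same in both: the substitution $u=y-\zeta^2 t$ followed by dominated convergence, with $x>y$ supplying the exponential weight $e^{-2t(x-y)}$.

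The paper never differentiates $\E$ in $x$ directly. It uses the factorization \eqref{E_factorization}, expands $\partial_x^m$ by Leibniz, and turns $\partial_x^j$ acting on $e^{-(u-x)^2/\zeta^2}$ into $(-1)^j\partial_u^j$. It then integrates by parts $j$ times. The factor $(u-y)^{n+1}$ of $P$ kills every boundary term when $j\leqslant n+1$ and leaves exactly one when $j=n+2$. Each remaining integral is $O(\zeta^2)$ relative to $\E(z,x,y)$ by the single limit \eqref{eqn_q_integral}.

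You instead write $\partial_x^m\E=\E R_m$, which works because the exponent is quadratic in $x$, and do a power count in $\zeta$. Each $x$-derivative costs $\zeta^{-2}$. The Taylor factorization $P(y-\zeta^2 t)=\zeta^{2(n+1)}t^{n+1}S(\zeta^2 t)$ gains $\zeta^{2(n+1)}$, and the change of variables gains one more $\zeta^2$.

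Your route gives somewhat more in one stroke. You get the exact form $(1-z^2)^{n+2-m}J(z)$ with $J$ convergent, hence an explicit rate when $m\leqslant n+1$. When $m=n+2$ you get the explicit limit, which simplifies to $-e^{-y^2/2}P^{(n+1)}(y)$; this agrees with what the paper's single surviving boundary term produces.

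The paper's route avoids the Fa\`a di Bruno bookkeeping for $R_m$. In particular, it does not need the check that terms with $b\geqslant 1$ carry an extra factor $\zeta^{2b}$ and vanish. It also shows directly, through the boundary terms, why exactly $n+1$ derivatives can be absorbed by the zero of $P$ at $y$.
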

\begin{proof}
First, we check that formula \eqref{def_mathcal_E} can be rewritten in the equivalent form  
\begin{equation}\label{E_factorization}
\E(z,x,y)  =e^{(x^2+y^2)/2  - 2xy/(1+z)} \times 
\zeta^{-1} 
e^{-(y-x)^2/\zeta^2 },
\end{equation}
where $\zeta=\zeta(z)=\sqrt{1-z^2}$. For the rest of this proof, we assume that $1/\sqrt{2}<z<1$, so that $0<\zeta<1/\sqrt{2}$.

We claim that, for any $m\in {\mathbb Z}_{\geqslant 0}$ and any fixed $x,y \in \r$  such that $x>y$, we have
\begin{equation}\label{eqn_q_integral}
\zeta^{-2} e^{(y-x)^2/\zeta^2} \int_{-\infty}^y  
e^{-2xu/(1+z)-(u-x)^2/\zeta^2}   u^m \d u \to 
\frac{e^{-x y}y^m}{2(x-y)},  
\end{equation}
as $z\to 1-$ (note that $\zeta\to 0+$ as $z\to 1-$). 
Indeed, changing the variable of integration $u=y-\zeta^2 v$, we obtain
\[
\zeta^{-2}  e^{(y-x)^2/\zeta^2}  \int_{-\infty}^y  
e^{-2xu/(1+z)-(u-x)^2/\zeta^2}   u^m \d u
= e^{-2x y/(1+z)} \int_0^{\infty} e^{-2(x-y)v -\zeta^2 (v^2-2xv/(1+z))}  (y-\zeta^2 v)^m \d v,
\]
and the limit in \eqref{eqn_q_integral} follows by the dominated convergence theorem.

Next, let $q(z,u)$ be a polynomial in $u$ of the form
\begin{equation}\label{form_of_q}
q(z,u)=\sum\limits_{i=0}^M a_i(z) u^i,
\end{equation}
where $M$ does not depend on $z$ and the coefficients $a_i(z)$ are continuous in some neighborhood of $z=1$. For $z\in (0,1)$ and $j \in {\mathbb Z}_{\geqslant 0}$ we consider 
\begin{equation}\label{def_Q_j}
{\mathcal Q}_{j}(z,x,y):=\zeta^{-1} \int_{-\infty}^y \Big[\partial_u^j  
e^{-(u-x)^2/\zeta^2} \Big] \times  e^{-2xu/(1+z)} q(z,u) (u-y)^{n+1} \d u. 
\end{equation}
From \eqref{E_factorization} and \eqref{eqn_q_integral} we conclude that 
\begin{equation*}
\frac{{\mathcal Q}_{0}(z,x,y)}{\E(z,x,y)} \to 0
\end{equation*}
as $z \to 1-$. When $0< j \leqslant n+1$, we 
integrate by parts $j$ times 
and obtain 
\begin{align}\label{Q_j_integration_by_parts}
{\mathcal Q}_j(z,x,y)&= 
\zeta^{-1} \sum\limits_{i=0}^{j-1} (-1)^i \Big[\partial_u^{j-1-i}  
e^{-(u-x)^2/\zeta^2} \Big]_{u=y} \times 
\Big[\partial_u^i \Big(  
e^{-2xu/(1+z)} q(z,u) (u-y)^{n+1}\Big)\Big]_{u=y} 
\\\nonumber &+(-1)^j \zeta^{-1} \int_{-\infty}^y 
e^{-(u-x)^2/\zeta^2} 
\partial_u^j  \Big(
e^{-2xu/(1+z)} q(z,u) (u-y)^{n+1}\Big) \d u.
\end{align}
In the finite sum in the above formula, we have $i\leqslant j-1 \leqslant n$, and therefore
\[
\Big[\partial_u^i  
e^{-2xu/(1+z)} q(z,u) (u-y)^{n+1}\Big]_{u=y}=0. 
\]
Thus, the finite sum in \eqref{Q_j_integration_by_parts} is equal to zero, and
it follows from \eqref{E_factorization} and \eqref{eqn_q_integral} that, for $0<j\leqslant n+1$,
\begin{equation*}
\frac{{\mathcal Q}_{j}(z,x,y)}{\E(z,x,y)} \to 0, 
\end{equation*}
as $z \to 1-$. 

When $j=n+2$, the preceding argument shows that  the terms with $0\leqslant i \leqslant j-2$ in the finite sum in \eqref{Q_j_integration_by_parts} are all equal to zero, and when $i=j-1=n+1$ we have
\[
\Big[\partial_u^{n+1} \Big(  
e^{-2xu/(1+z)} q(z,u) (u-y)^{n+1}\Big)\Big]_{u=y}= 
e^{-2xy/(1+z)} q(z,y) (n+1)!.
\]
Thus, it follows from 
\eqref{E_factorization}, \eqref{eqn_q_integral} and 
\eqref{Q_j_integration_by_parts} that, as $z\to 1-$,
\begin{equation*}
\frac{{\mathcal Q}_{n+2}(z,x,y)}{\E(z,x,y)} \to 
 (-1)^{n+1} 
 e^{-(x^2+y^2)/2} q(1,y) (n+1)!. 
\end{equation*}

Now we are ready to complete the proof of Lemma \ref{lemma6}. For any fixed $z\in (1/\sqrt{2},1)$, we interchange the integration and differentiation in the numerator in \eqref{eqn_integral_limit},   
use the factorization \eqref{E_factorization}, and obtain
\begin{equation}\label{numerator_as_sum_of_Q}
\partial_x^{m} \int_{-\infty}^y \E(z,x,u) e^{-u^2/2}  P(u) \d u
= \sum\limits_{j=0}^m (-1)^j \binom{m}{j} \widetilde {\mathcal Q}_{j}(z,x,y),
\end{equation}
where we have defined 
\begin{equation}\label{def_tilde_Q}
\widetilde {\mathcal Q}_{j}(z,x,y):=\zeta^{-1} \int_{-\infty}^y
\Big[ \partial_u^{j} e^{-(u-x)^2/\zeta^2} \Big]  \times \Big[ \partial_x^{m-j} e^{x^2/2-2xu/(1+z)} \Big]  \times P(u) \d u.
\end{equation}
When deriving the above formula, we used the fact that 
\[
\partial_x^{j} e^{-(u-x)^2/\zeta^2}=(-1)^j \partial_u^{j} e^{-(u-x)^2/\zeta^2}. 
\]
The interchange of integration and differentiation is justified since, for $z\in (1/\sqrt{2},1)$, we have $1/\zeta^2>2$; thus, for all $u\in \r$ the integrand in \eqref{def_tilde_Q} is dominated by $C\exp(- u^2)$ for some $C>0$, uniformly in $x$ on compact subsets of $\r$.  We note that the conditions  $P^{(i)}(y)=0$ for $i=0,1,\dots,n$ imply that $P(u)=q_1(u) (u-y)^{n+1}$ for some polynomial $q_1$, which in turn implies
\[
\Big[ \partial_x^{m-j} e^{x^2/2-2xu/(1+z)} \Big] \times  P(u)=
e^{x^2/2-2xu/(1+z)} q_2(x,u/(1+z),u) (u-y)^{n+1},
\]
where $q_2(x_1,x_2,x_3)$ is some polynomial in three variables.   The important conclusion is that $q_3(z,u)=q_2(x,u/(1+z),u)$ is of the form \eqref{form_of_q} (recall that everywhere in this proof $x$ is fixed, so dependence of the coefficients of the polynomial $q_3(z,u)$ on $x$ is not a concern). Thus, $\widetilde {\mathcal Q}_{j}(z,x,y)$ is an integral of the form \eqref{def_Q_j}, and using the results about ${\mathcal Q}_j(z,x,y)$ that we established above, we conclude that the limit  
\[
\lim\limits_{z\to 1-} \frac{\widetilde {\mathcal Q}_{j}(z,x,y)}{\E(z,x,y)} 
\]
exists and is equal to zero for all $0\leqslant j \leqslant n+1$, while for $j=n+2$ the limit exists and is finite. The desired result now follows from \eqref{numerator_as_sum_of_Q}.
\end{proof}

\begin{corollary}\label{Corollary2}
Let  $x$ and $y$ be fixed real numbers such that $x>y$. Then 
\[
\lim_{z\to 1-} \frac{\E_{\k}(z,x,y)}{\E(z,x,y)}
\]
exists and is finite. 
\end{corollary}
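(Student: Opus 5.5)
We split into the cases $l=1$ and $l\geqslant 2$, and in each case reduce to Lemma \ref{lemma6} after expanding the operator ${\mathcal M}_{\k,x}$. Fix real $x>y$.

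\emph{Case $l\geqslant 2$.} We start from Corollary \ref{Corollary1}, which writes $\E_{\k}(z,x,y)$ as ${\mathcal M}_{\k,x}$ applied to $\int_{-\infty}^y \E(z,x,u) e^{-u^2/2}P_{\k}(u,y)\,\d u$. Here $P(u):=P_{\k}(u,y)$ is a polynomial in $u$ with $P^{(i)}(y)=0$ for $i=0,\dots,l-2$. Expanding the Wronskian in \eqref{def_M_kx} along its last column gives
\[
{\mathcal M}_{\k,x} h(x)=\sum_{m=0}^{l} c_m(x)\, \partial_x^m h(x),
\]
with coefficients $c_m(x)$ (signed minors of the matrix $[\partial_x^i\psi_{k_j}(x)]$) that depend only on $x$. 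We apply Lemma \ref{lemma6} with $n=l-2$ to each term $\partial_x^m$ of this sum. For $m\leqslant l-1=n+1$ the ratio to $\E(z,x,y)$ tends to $0$. For $m=l=n+2$ the ratio has a finite limit. Since $x$ is fixed and the $c_m(x)$ are finite constants, the limit of $\E_{\k}(z,x,y)/\E(z,x,y)$ exists and is finite. The one technical point is differentiating under the integral sign, which is already justified inside the proof of Lemma \ref{lemma6} for $z\in(1/\sqrt2,1)$.

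\emph{Case $l=1$, $\k=(k)$.} Lemma \ref{lemma5} gives
\[
\E_{\k}(z,x,y)={\mathcal M}_{\k,x} I_k(z,x,y)=\psi_k(x)\,\partial_x I_k-\psi_k'(x)\, I_k ,
\]
with $I_k=\int_{-\infty}^y\E(z,x,u)e^{-u^2/2}H_k(u)\,\d u$. Now $P=H_k$ has no prescribed zeros at $y$, so Lemma \ref{lemma6} does not apply as stated with $n\geqslant 0$. Its proof still works with the "$n=-1$" convention, i.e.\ with no factor $(u-y)^{n+1}$. We then have:
\begin{itemize}
\item ${\mathcal Q}_0/\E$ has a finite limit by \eqref{eqn_q_integral}; there is no extra $\zeta^{-1}$ weight, and in fact the ratio tends to $0$.
\item For $j=1$, one integration by parts produces the boundary term $\zeta^{-1}e^{-(y-x)^2/\zeta^2}e^{-2xy/(1+z)}q(z,y)$. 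Divided by $\E(z,x,y)$ this is finite by \eqref{E_factorization}. The remaining integral is of ${\mathcal Q}_0$ type, so it vanishes in the limit.
\end{itemize}
So I would restate Lemma \ref{lemma6} as also covering $n=-1$ (the integration-by-parts argument is unchanged), and conclude exactly as in the case $l\geqslant 2$.

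The main obstacle is the bookkeeping that the derivatives of order $m=l$ produce at most the critical term $j=n+2$ and nothing of higher order. This holds because ${\mathcal M}_{\k,x}$ has order exactly $l$ while $P$ vanishes to order $l-1$ at $y$. Beyond that, the argument only uses the fixed constants $c_m(x)$.
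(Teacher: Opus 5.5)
Your proof is correct and, for $l\geqslant 2$, is exactly the paper's argument. You write ${\mathcal M}_{\k,x}$ as an order-$l$ operator whose coefficients depend only on $x$, then combine Corollary~\ref{Corollary1} with Lemma~\ref{lemma6} for $n=l-2$. Your separate treatment of $l=1$ via an $n=-1$ version of Lemma~\ref{lemma6} is also valid, and it covers a case the paper's one-line proof passes over, since Corollary~\ref{Corollary1} is stated only for $l\geqslant 2$. One small correction: Lemma~\ref{lemma5} actually gives $\E_{\k}=-{\mathcal M}_{\k,x}I_k$ when $l=1$, a sign that does not affect the conclusion.
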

\begin{proof}
Using the same reasoning as in the derivation of \eqref{psi_k_n_sum_derivatives}, we conclude that 
\begin{equation}\label{M_kx_h}
{\mathcal M}_{\k,x} h(x)=e^{-l x^2/2} \sum\limits_{j=0}^l q_{j}(x) \partial_x^j h(x), 
\end{equation}
for some real polynomials $q_j(x)$, which may also depend on $\k$. 
The desired result now follows from Corollary \ref{Corollary1} and Lemma \ref{lemma6}. 
\end{proof}

\noindent
{\bf Proof of Theorem \ref{thm_main}:}
We denote by $\r[x]$ the set of polynomials in the $x$-variable with real coefficients, and by $\r_{n}[z,x,y]$ the set of real polynomials in the three variables $z,x,y$ whose degree in $z$ is not greater than $n$. 
An expression such as 
\[
g(z,x,y) \in H(z,x,y) \sum\limits_{i=1}^n \r[x] h_{i}(z,x,y)
\]
should be interpreted as stating that there exist polynomials $P_i \in \r[x]$ such that 
\[
g(z,x,y) = H(z,x,y) \sum\limits_{i=1}^n P_i(x) h_{i}(z,x,y)
\]
for all $z,x,y$ in the domain of $g$, $H$ and $h_i$. For example, with this notation we can express \eqref{M_kx_h} in the form 
\begin{equation}\label{item_i}
{\mathcal M}_{\k,x} h(x) \in e^{-l x^2/2}\sum\limits_{j=0}^l \r[x] \, \partial_x^j h(x). 
\end{equation}

As before, we denote $\zeta:=\sqrt{1-z^2}$ and $w:=(y-xz)/\zeta$. We collect some preliminary facts. 
\begin{itemize}
\item[(i)] For all $n\in \z_{\geqslant 0}$ 
\begin{equation}\label{item_ii}
 H_n(w)=n! \sum\limits_{j=0}^{\lfloor n/2 \rfloor} \frac{(-1)^j (2w)^{n-2j}}{j! (n-2j)!}   \in \zeta^{-n} \r_{n}[z,x,y]. 
\end{equation}
\item[(ii)] For all $n\in {\mathbb N}$ 
\begin{equation}\label{item_iii}
\partial_x^n \, {\textnormal{erf}}(w)=-(z/\zeta)^n \frac{2}{\sqrt{\pi}}  e^{-w^2} H_{n-1}(w) \in e^{-w^2} \zeta^{1-2n} \r_{2n-1}[z,x,y].
\end{equation}
\item[(iii)] For all $n,m \in \z_{\geqslant 0}$ 
\begin{equation}\label{item_iv}
\partial_x^m \Big[ e^{-w^2} H_n(w)\Big]=(z/\zeta)^m e^{-w^2} H_{n+m}(w).
\end{equation}
\end{itemize}
The expression for $H_n(w)$ in item (i) can be found  in \cite[formula 18.5.13]{NIST}. The results in items (ii) and (iii) follow from \eqref{two_integrals} and the fact that $\partial_x = (-z/\zeta) \partial_w$. 

We denote
\begin{align*}
I^{(1)}_n(z,x,y)&:=
 z^n \psi_n(x)  \frac{\sqrt{\pi}}{2} \big( 1+ 
{\textnormal{erf}}(w) \big),
\\ 
I_n^{(2)}(z,x,y)&:=-z^n e^{-w^2} \sum\limits_{i=1}^n \binom{n}{i}  (z/\zeta)^{-i}
H_{i-1}(w) \psi_{n-i}(x).
\end{align*}
By \eqref{eqn_PM}, we have 
\[
\E_{\k}(z,x,y) \in e^{-(l-1) y^2/2}  \sum\limits_{j=1}^l  \r[y] \Big[ {\mathcal M}_{\k,x} I^{(1)}_{k_j}(z,x,y)+{\mathcal M}_{\k,x} I^{(2)}_{k_j}(z,x,y)\Big].
\]
Using \eqref{item_i} we obtain
\[
{\mathcal M}_{\k,x} I^{(1)}_{k_j}(z,x,y) \in 
 z^{k_j} \frac{\sqrt{\pi}}{2} \big( 1+ 
{\textnormal{erf}}(w) \big) {\mathcal M}_{\k,x} \psi_{k_j}(x)+
 z^{k_j} e^{-lx^2/2} \sum\limits_{\substack{n\geqslant 0, m\geqslant 1 \\ m+n\leqslant l}}   \r[x] \partial_x^n \psi_{k_j}(x)\partial_x^m {\textnormal{erf}}(w).
\]
By definition of ${\mathcal M}_{\k,x}$ (see \eqref{def_M_kx}), we have ${\mathcal M}_{\k,x} \psi_{k_j}(x)\equiv 0$. We use \eqref{item_ii}, \eqref{item_iii}, and the fact that $\partial_x^n \psi_{k_j}(x)\in \exp(-x^2/2) \r[x]$ to conclude that 
\[
{\mathcal M}_{\k,x} I^{(1)}_{k_j}(z,x,y) \in 
 z^{k_j}  e^{-(l+1)x^2/2-w^2} \sum\limits_{m=1}^l   \r[x] (z/\zeta)^m H_{m-1}(w)\subset  e^{-(l+1)x^2/2-w^2} \zeta^{1-2l} \r_{2l-1+k_j}[z,x,y].
\]
In the last step, we used the fact that for all $m=1,2,\dots,l$
\[
z^{k_j+m} \zeta^{-m} H_{m-1}(w) \in 
z^{k_j+m} \zeta^{1-2m} \r_{m-1}[z,x,y] \subset \zeta^{1-2l} \r_{2l-1+k_j}[z,x,y], 
\]
which is true since $\zeta^{2l-2m}=(1-z^2)^{l-m}$. 
Similarly, using \eqref{item_ii} and \eqref{item_iv}, we conclude that 
\[
{\mathcal M}_{\k,x} I^{(2)}_{k_j}(z,x,y) 
\in z^{k_j} e^{-(l+1)x^2/2-w^2} \sum\limits_{i=1}^{k_j} \sum\limits_{m=0}^l \r[x] (z/\zeta)^{m-i} H_{m+i-1}(w) 
\subset e^{-(l+1)x^2/2-w^2} \zeta^{1-2l} \r_{2l-1+k_j}[z,x,y]. 
\]
Thus, 
\[
\E_{\k}(z,x,y) \in e^{-(l+1)x^2/2-(l-1)y^2/2-w^2} \zeta^{1-2l} \r_{2l-1+k_l}[z,x,y].
\]
Using the expression \eqref{E_factorization} and 
the above two equations we arrive at 
\[
\frac{\E_{\k}(z,x,y)}{\E(z,x,y)} \in e^{-l(x^2+y^2)/2} (1-z^2)^{1-l} \, \r_{2l-1+k_l}[z,x,y]. 
\]
Thus, we have established that
\begin{equation}\label{R_and_P}
R(z,x,y):=e^{l(x^2+y^2)/2} \frac{\E_{\k}(z,x,y)}{\E(z,x,y)}=\frac{P(z,x,y)}{(1-z^2)^{l-1}},
\end{equation}
where $P(z,x,y)$ is a real polynomial in three variables, whose degree in the $z$-variable is at most $2l-1+k_l$.  We see that $R(z,x,y)$ is a rational function of $z$ with possible poles only at $z=\pm 1$. Now assume that $x, y$ are real and $x>y$.  Letting $z\to 1-$ and applying Corollary \ref{Corollary2}, we  conclude that  $R(z,x,y)$ has a finite limit at $z=1$. This implies that, when $x, y$ are real and $x>y$, the polynomial $P(z,x,y)$ is divisible by $(1-z)^{l-1}$. This divisibility property holds for all $x,y\in \c$. Indeed, consider the Taylor coefficients of the numerator  $P(z,x,y)$ at $z=1$. These Taylor coefficients are polynomials in $x,y$ and the fact that $P(z,x,y)$ is divisible by $(1-z)^{l-1}$ implies that the first $l-1$ coefficients are equal to zero. It is clear that if a polynomial in two variables $x,y$ is zero for all real values $x>y$, then it must be equal to zero for all $x,y\in \c$. 

By the symmetry conditions \eqref{E_k_symmetries}, the rational function $R$ satisfies $R(z,x,y)=(-1)^{\kappa} R(-z,-x,y)$, which implies that it also has a finite limit as $z\to -1$. Thus, the polynomial $P(z,x,y)$ is divisible by $(1-z^2)^{l-1}$. Since $(1-z^2)^{l-1}$ has degree $2l-2$, and the numerator $P(z,x,y)$ has degree in $z$ not greater than $2l-1+k_l$, we conclude that the function $R(z,x,y)$ must be a polynomial  in the three variables $z,x,y$, with degree in $z$ not greater than $k_l+1$. 

Thus, we have established that there exists a polynomial $R(z,x,y)$, with degree in $z$ not greater than $k_l+1$, such that  
\[
\E_{\k}(z,x,y)=e^{-l(x^2+y^2)/2}\, \E(z,x,y)  R(z,x,y),
\]
which is equivalent to the desired result \eqref{eqn_main} by \eqref{mathcal_E_E}. 
\qed

\vspace{0.2cm}
\noindent
{\bf Remark 2}: One possible way to prove Conjecture 2 is to identify the coefficient of the highest power of $z$ in the polynomial $P(z,x,y)$ in \eqref{R_and_P} (namely, the coefficient of $z^{2l-1+k_l}$). Indeed, after dividing $P(z,x,y)$ by $(1-z^2)^{l-1}$, this would give $Q^{\k}_{k_l+1}(x,y)$ -- the coefficient of $z^{k_l+1}$ in the polynomial function $R(z,x,y)$. Thus, in order to prove Conjecture 2, one would need to follow the steps of the proof of Theorem \ref{thm_main} and keep track of the coefficient of the highest power of $z$ in every term. However, the resulting expressions become complicated very quickly, and we were unable to complete this computation. 
\vspace{0.2cm}

\noindent
{\bf Proof of Proposition \ref{proposition1}:}
We rewrite \eqref{eqn_main} in the form
\begin{equation}\label{generating_functions}
\sum\limits_{n \in \z_{\geqslant 0} \setminus \k}  z^n \frac{H_{\k,n}(x) H_{\k,n}(y)}{2^{n+l} n! \prod\limits_{j=1}^l (n-k_j)}= \sum\limits_{n\geqslant 0} z^n \frac{H_n(x) H_n(y)}{2^n n!} \sum_{m=0}^{k_l+1} z^m Q_m^{\k}(x,y). 
\end{equation}
Comparing the coefficient of $z^0$, we obtain
\[
Q_0^{\k}(x,y)= \frac{H_{\k,0}(x) H_{\k,0}(y)}{(-2)^{l}  \prod\limits_{j=1}^l k_j}. 
\]
Since $H_{\k,0}(x)=(-2)^l \Big[\prod_{j=1}^l k_j\Big] H_{\k-1}(x)$, which follows from \cite[Lemma 3.1]{Ullate_2018}, we obtain formula \eqref{eqn_Qk0} for  $Q_0^{\k}(x,y)$. 

Comparing the coefficients of $z^n$ in \eqref{generating_functions} gives the recursion identity \eqref{Q_m_recursion}. The parity conditions for $Q_n^{\k}(x,y)$ follow from \eqref{eqn_main} and Lemma \ref{lemma1}(iii). 
\qed

\noindent
{\bf Proof of Proposition \ref{proposition2}:}
The proof is based on the following short-time expansion of the heat kernel. This result can be found in  \cite[Section 5.9]{Avramidi2015}, see also  \cite{Iliev2005,Vassilevich2003}. Let $p(t,x,y)$ be the heat kernel of the operator  $L=-\partial_x^2+V(x)$, where $V : \r \mapsto \r$ is a smooth function. As $t\to 0^+$, we have the asymptotic expansion 
\begin{equation}\label{Hadamard_expansion}
p(t,x,y)= \frac{1}{\sqrt{4\pi t}} 
\exp\Big( -\frac{(y-x)^2}{4t} \Big) 
\big[ 1 + u_1(x,y) t + u_2(x,y) t^2+O(t^3) \big],
\end{equation}
where
\begin{equation}\label{eqn_u1}
u_1(x,y)=-\int_0^1 V(x+s(y-x)) \d s, 
\end{equation}
and 
\begin{equation}\label{eqn_u2}
u_2(x,y)=\frac{1}{2} u_1(x,y)^2 - \int_0^1 
s(1-s) V''(x+s(y-x)) \d s.
\end{equation}
The heat kernel for the operator ${\mathcal L}^{(1)}=-\partial_x^2+x^2$ is 
\[
p^{(1)}(t,x,y)=\frac{ e^{-t}}{\sqrt{\pi}} \E(e^{-2t},x,y),
\]
since ${\mathcal L}^{(1)} \psi_n=(2n+1) \psi_n$ and the functions $\psi_n$ form an orthogonal basis for $L_2(\r,\d x)$ with $\lVert \psi_n \rVert^2=\sqrt{\pi} 2^n n!$. 
Let $\omega(x):=\partial_x^2 \ln(H_{\k}(x))$. From \eqref{L_heat_kernel} we find that the heat kernel for the operator 
\[
{\mathcal L}^{(2)}=-\partial_x^2+U_{\k}(x)+2l=-\partial_x^2+x^2-2\omega(x)+2l, 
\] 
 is 
\[
p^{(2)}(t,x,y)=\frac{e^{-t} \E_{\k}(e^{-2t},x,y)}{\sqrt{\pi} \psi_{\k}(x) \psi_{\k}(y)}.
\] 
From \eqref{eqn_main} and \eqref{mathcal_E_E} we conclude that 
\begin{equation}\label{p_ratio1}
\frac{p^{(2)}(t,x,y)}{p^{(1)}(t,x,y)}=\frac{1}{H_{\k}(x) H_{\k}(y)}
\sum\limits_{n=0}^{k_l+1} e^{-2nt} Q_n^{\k}(x,y). 
\end{equation}
Let $\alpha_1(x,y)$ and $\alpha_2(x,y)$ (respectively, $\beta_1(x,y)$ and $\beta_2(x,y)$) be the coefficients $u_1(x,y)$ and $u_2(x,y)$ defined by equations \eqref{eqn_u1} and \eqref{eqn_u2} with $V(x)=x^2+2l-2\omega(x)$ (respectively, $V(x)=x^2$). For simplicity, we write $\alpha_i=\alpha_i(x,y)$ and similarly for $\beta_i$. According to \eqref{Hadamard_expansion}, we have 
\begin{equation}\label{p_ratio2}
\frac{p^{(2)}(t,x,y)}{p^{(1)}(t,x,y)}=
\frac{1+\alpha_1 t+\alpha_2 t^2+O(t^3)}{1+\beta_1 t + \beta_2 t^2 + O(t^3)}.
\end{equation}

We compute $\beta_i$ using formulas \eqref{eqn_u1} and \eqref{eqn_u2} with $V(x)=x^2$: 
\[
\beta_1=-\frac{1}{3} (x^2+xy+y^2), \;\;\; \beta_2=\frac{1}{2} \beta_1^2 - \frac{1}{3}. 
\]

Next, we use the fact that $\omega(x)=\partial_x^2 \ln(H_{\k}(x))$ and compute 
\begin{equation*}
\Omega(x,y):=\int_0^1 \omega(x+s(y-x)) \d s=\frac{1}{y-x} \Big( \frac{H'_{\k}(y)}{H_{\k}(y)}
- \frac{H'_{\k}(x)}{H_{\k}(x)} \Big). 
\end{equation*}
After integrating by parts twice, we arrive at
\begin{equation*}
\Theta(x,y):=\int_0^1 
s(1-s)\omega''(x+s(y-x)) \d s=
\frac{1}{(y-x)^2} \Big[\omega(x)+\omega(y)-2\Omega(x,y)\Big].
\end{equation*}
Again, in what follows, we will write $\Omega=\Omega(x,y)$ and $\Theta=\Theta(x,y)$. 
Using formulas \eqref{eqn_u1} and \eqref{eqn_u2} with $V(x)=x^2+2l-2\omega(x)$ we find  
\[
\alpha_1=\beta_1-2l+2 \Omega, \;\;\;
\alpha_2=\frac{1}{2} \alpha_1^2 - \frac{1}{3} + 2 \Theta. 
\]

We check that 
\[
\frac{1+\alpha_1 t+\alpha_2 t^2+O(t^3)}{1+\beta_1 t + \beta_2 t^2 + O(t^3)}=
1+\gamma_1 t + \gamma_2 t^2 + O(t^3),
\]
where 
\[
\gamma_1=\alpha_1-\beta_1=2 (\Omega-l),
\]
and 
\[
\gamma_2=\alpha_2-\beta_2-\gamma_1 \beta_1 = 2 (\Omega-l)^2 + 2\Theta.
\]
Combining the above formulas with \eqref{p_ratio1} and \eqref{p_ratio2}, we arrive at the following result: for all $x,y\in\r$, as $t\to 0^+$,
\[
\frac{1}{H_{\k}(x) H_{\k}(y)}
\sum\limits_{n=0}^{k_l+1} e^{-2nt} Q_n^{\k}(x,y)=1+2 (\Omega(x,y)-l) t + 2 \Big[ (\Omega(x,y)-l)^2 + \Theta(x,y)\Big] t^2 + O(t^3).
\]
Comparing the first three coefficients in the Taylor expansions at $t=0$ on both sides of the above equation gives us the desired formulas  \eqref{sum1}, \eqref{sum2} and \eqref{sum3}. 
\qed

\vspace{0.2cm}
\noindent
{\bf Remark 3:} Note that the asymptotic expansion \eqref{p_ratio2} implies the assertion of Corollary \ref{Corollary2}, which was used crucially in the proof of Theorem \ref{thm_main}. Thus, when $\k$ is a Krein-Adler sequence, Corollary \ref{Corollary2} is a simple consequence of the short-time heat-kernel expansion.
\vspace{0.2cm}

\noindent
{\bf Proof of Proposition \ref{proposition3}:}
Let $Q_n^k(x,y)$ be the polynomials defined in \eqref{Q_n_l=1}. Since in this case $\k=(k)$, we will write $Q_n^k(x,y)$ instead of $Q_n^{\k}(x,y)$. Similarly, we will write $H_{k,n}(x)=\wr[H_k(x),H_n(x)]$ instead of $H_{\k,n}(x)$.  
We will show that $L_n^k(x,y)=R_n^k(x,y)$ for all $k\geqslant 0$, $0\leqslant n \leqslant k+1$ and all $x,y\in \c$, where 
\[
L_n^k(x,y):=\sum_{m=0}^{n}
\frac{H_m(x)H_m(y)}{2^m m!}
Q_{n-m}^k(x,y), \;\;\;
R_n^k(x,y):=
\mathbf 1_{\{n\ne k\}}
\frac{H_{k,n}(x)H_{k,n}(y)}
{2^{n+1}n!(n-k)}.
\]
According to Proposition \ref{proposition1}, this implies that $Q_n^k(x,y)$ are the polynomials appearing in \eqref{eqn_main}. 

We denote 
\[
h_m(x,y):=H_m(x)H_m(y).
\] 
To simplify notation, we  write $h_m=h_m(x,y)$ and similarly for $Q_n^k$, $L_n^k$, and $R_n^k$. From equation \eqref{Q_n_l=1}, it follows that for $n\geqslant 1$ and $k\geqslant 1$,
\[
Q_n^k={\mathbf 1}_{\{n\geqslant 1\}} 2k Q_{n-1}^{k-1} + 
{\mathbf 1}_{\{n=1\}} h_k-2k {\mathbf 1}_{\{n=0\}} h_{k-1}. 
\]
Thus, for $n\geqslant 1$ and $k\geqslant 1$,
\[
L_n^k=\sum_{m=0}^{n}
\bigg[ \frac{h_m}{2^m m!}\bigg]
Q_{n-m}^k=2 k \sum_{m=0}^{n-1}
\bigg[ 
\frac{h_m}{2^m m!}\bigg]
Q_{n-1-m}^{k-1}+\bigg[\frac{h_{n-1}}{2^{n-1} (n-1)!}\bigg] h_k
-2k \bigg[\frac{h_{n}}{2^{n} n!}\bigg] h_{k-1},
\]
and we obtain the recurrence relation 
\begin{equation}\label{L_recursion}
L_n^k=2 k L_{n-1}^{k-1}+\frac{nh_k h_{n-1}-k h_{k-1} h_n}{2^{n-1} n!}.
\end{equation}

Next, using the facts 
\[
H_n'(x)=2n H_{n-1}(x)=2x H_n(x)-H_{n+1}(x),
\]
we check that 
\begin{align*}
&H_{k,n}(x)=2n H_k(x) H_{n-1}(x)-2k H_{k-1}(x)H_n(x), \\
&H_{k-1,n-1}(x)=H_{k}(x)H_{n-1}(x) -H_{k-1}(x) H_{n}(x). 
\end{align*}
Using the above two formulas and a straightforward but  tedious calculation, we verify that 
\[
H_{k,n}(x)H_{k,n}(y)=4n(n-k)h_k h_{n-1} - 4 k (n-k) h_{k-1} h_n+4 n k H_{k-1,n-1}(x) H_{k-1,n-1}(y).
\]
When $n\geqslant 1$ and $n\neq k$, we divide both sides of the above identity 
by $2^{n+1} n! (n-k)$ and obtain  
\begin{equation}\label{R_recursion}
R_n^k=2 k R_{n-1}^{k-1}+\frac{nh_k h_{n-1}-k h_{k-1} h_n}{2^{n-1} n!}. 
\end{equation}
We check that the above identity also holds for $n=k$, since in this case $R_n^k=R_{n-1}^{k-1}=nh_k h_{n-1}-k h_{k-1} h_n=0$.

We see from formulas \eqref{L_recursion} and \eqref{R_recursion} that $L_n^k$ and $R_n^k$ satisfy the same recurrence relation, which lowers both indices $n$ and $k$ by one. We want to prove the identity $L_n^k=R_n^k$ for $k\geqslant 0$ and $0\leqslant n \leqslant k+1$. After applying the recurrence formulas \eqref{L_recursion} and \eqref{R_recursion}  $M=\min(n,k)$ times, we arrive at either the case $n=0$, $k\geqslant 0$ or $n=1$, $k=0$. In other words, if the identity $L_n^k=R_n^k$ holds for $n=0$, $k\geqslant 0$ and  $n=1$, $k=0$, then it holds for all $k\geqslant 0$ and $n=0,1,\dots,k+1$. The fact that the identity $L_n^k=R_n^k$ holds in these boundary cases is easily verified directly using the formulas 
\[
H_{k,0}(x)=-2k H_{k-1}(x), \;\;\; H_{0,n}(x)=2n H_{n-1}(x),
\]
and the definition \eqref{Q_n_l=1} of $Q_n^k(x,y)$. 
\qed

%


\begin{thebibliography}{10}

\bibitem{Adler1994}
V.~E. Adler.
\newblock A modification of {Crum}'s method.
\newblock {\em Theoretical and Mathematical Physics}, 101(3):1381--1386, 1994.
\newblock \url{https://doi.org/10.1007/BF01035458}.

\bibitem{Avramidi2015}
I.~G. Avramidi.
\newblock {\em {Heat Kernel Method and its Applications}}.
\newblock Birkh{\"a}user, Cham, 2015.
\newblock \url{https://doi.org/10.1007/978-3-319-26266-6}.

\bibitem{Bakry2014}
D.~Bakry, I.~Gentil, and M.~Ledoux.
\newblock {\em {Analysis and Geometry of Markov Diffusion Operators}}.
\newblock Springer Cham, 2014.
\newblock \url{https://doi.org/10.1007/978-3-319-00227-9}.

\bibitem{Bonneux2018}
N.~Bonneux and M.~Stevens.
\newblock Recurrence relations for {Wronskian} {Hermite} polynomials.
\newblock {\em SIGMA. Symmetry, Integrability and Geometry: Methods and
  Applications}, 14:048, 2018.
\newblock \url{https://doi.org/10.3842/SIGMA.2018.048}.

\bibitem{NIST}
{\it NIST Digital Library of Mathematical Functions}.
\newblock \url{https://dlmf.nist.gov/}, Version 1.2.6, 2026-03-15.
\newblock F.~W.~J. Olver, A.~B. {Olde Daalhuis}, D.~W. Lozier, B.~I. Schneider,
  R.~F. Boisvert, C.~W. Clark, B.~R. Miller, B.~V. Saunders, H.~S. Cohl, and
  M.~A. McClain, eds.

\bibitem{Felder2012}
G.~Felder, A.~D. Hemery, and A.~P. Veselov.
\newblock Zeros of {Wronskians} of {Hermite} polynomials and {Young} diagrams.
\newblock {\em Physica D: Nonlinear Phenomena}, 241(23):2131--2137, 2012.
\newblock \url{https://doi.org/10.1016/j.physd.2012.08.008}.

\bibitem{Foata1978}
D.~Foata.
\newblock A combinatorial proof of the {Mehler} formula.
\newblock {\em Journal of Combinatorial Theory, Series A}, 24(3):367--376,
  1978.
\newblock \url{https://doi.org/10.1016/0097-3165(78)90066-3}.

\bibitem{Gomez-Ullate_2014}
D.~G\'omez-Ullate, Y.~Grandati, and R.~Milson.
\newblock Rational extensions of the quantum harmonic oscillator and
  exceptional {H}ermite polynomials.
\newblock {\em Journal of Physics A: Mathematical and Theoretical},
  47(1):015203, 2014.
\newblock \url{https://doi.org/10.1088/1751-8113/47/1/015203}.

\bibitem{Ullate_2018}
D.~G\'omez-Ullate, Y.~Grandati, and R.~Milson.
\newblock {Durfee rectangles and pseudo-Wronskian equivalences for Hermite
  polynomials}.
\newblock {\em Studies in Applied Mathematics}, 141(4):596--625, 2018.
\newblock \url{https://doi.org/10.1111/sapm.12225}.

\bibitem{Gomez2021}
D.~G{\'o}mez-Ullate, Y.~Grandati, and R.~Milson.
\newblock Complete classification of rational solutions of
  {$A_{2n}$}-{Painlev{\'e}} systems.
\newblock {\em Advances in Mathematics}, 385:107770, 2021.
\newblock \url{https://doi.org/10.1016/j.aim.2021.107770}.

\bibitem{Gomez2016}
D.~G{\'o}mez-Ullate, A.~Kasman, A.~B.~J. Kuijlaars, and R.~Milson.
\newblock Recurrence relations for exceptional {Hermite} polynomials.
\newblock {\em Journal of Approximation Theory}, 204:1--16, 2016.
\newblock \url{https://doi.org/10.1016/j.jat.2015.12.003}.

\bibitem{Jeffrey2007}
I.~S. Gradshteyn and I.~M. Ryzhik.
\newblock {\em Table of Integrals, Series, and Products}.
\newblock Academic Press, 7 edition, 2007.
\newblock edited by A. Jeffrey and D. Zwillinger.

\bibitem{Iliev2005}
P.~Iliev.
\newblock On the heat kernel and the {Korteweg--de Vries} hierarchy.
\newblock {\em Annales de l'Institut Fourier}, 55(6):2117--2127, 2005.
\newblock \url{https://doi.org/10.5802/aif.2154}.

\bibitem{Namias1980}
V.~Namias.
\newblock The fractional order {Fourier} transform and its application to
  quantum mechanics.
\newblock {\em IMA Journal of Applied Mathematics}, 25(3):241--265, 1980.
\newblock \url{https://doi.org/10.1093/imamat/25.3.241}.

\bibitem{Oblomkov1999}
A.~A. Oblomkov.
\newblock Monodromy-free {Schr{\"o}dinger} operators with quadratically
  increasing potentials.
\newblock {\em Theoretical and Mathematical Physics}, 121(3):1574--1584, 1999.
\newblock \url{https://doi.org/10.1007/BF02557204}.

\bibitem{Pupasov_2015}
A.~M. Pupasov-Maksimov.
\newblock {Propagators of isochronous an-harmonic oscillators and Mehler
  formula for the exceptional Hermite polynomials}.
\newblock {\em Annals of Physics}, 363:122--135, 2015.
\newblock \url{https://doi.org/10.1016/j.aop.2015.09.021}.

\bibitem{Pupasov_2007}
A.~M. Pupasov-Maksimov, B.~F. Samsonov, and U.~G\"unther.
\newblock {Exact propagators for SUSY partners}.
\newblock {\em Journal of Physics A: Mathematical and Theoretical},
  40(34):10557, 2007.
\newblock \url{https://dx.doi.org/10.1088/1751-8113/40/34/013}.

\bibitem{Szego}
G.~Szeg\"o.
\newblock {\em {Orthogonal polynomials}}.
\newblock Colloquium Publications, Volume XXIII. American Mathematical Society,
  Providence, Rhode Island, 1939.

\bibitem{Vassilevich2003}
D.~V. Vassilevich.
\newblock Heat kernel expansion: user's manual.
\newblock {\em Physics Reports}, 388(5-6):279--360, 2003.
\newblock \url{https://doi.org/10.1016/j.physrep.2003.09.002}.

\end{thebibliography}

\end{document}